\newtheorem{thm}{Theorem}
\newtheorem{lem}[thm]{Lemma}
\newtheorem{prop}[thm]{Proposition}
\newtheorem{cor}[thm]{Corollary}
\newtheorem{defe}[thm]{Definition}
\theoremstyle{remark}
\newtheorem{rem}[thm]{Remark}
\newtheorem{exam}[thm]{Example}
\newtheorem{ntn}[thm]{Convention}
\newcommand\myurl[1]{\url{#1}}
\let\oldmarginpar\marginpar
\renewcommand\marginpar[1]{\-\oldmarginpar[\raggedleft\footnotesize #1]%
{\raggedright\footnotesize #1}}
\newcommand{\nc}{\newcommand}
\nc{\ssec}{\subsection}
\nc{\on}{\operatorname}
\nc {\cG} {\mathcal{G}}
\nc {\cK}{\mathcal{K}}
\nc {\cC} {\mathcal{C}}
\nc {\cL} {\mathcal{L}}
\nc {\cE} {\mathcal{E}}
\nc {\cM} {\mathcal{M}}
\nc {\cO}{\mathcal{O}}
\nc {\cF}{\mathcal{F}}
\nc {\cZ}{\mathcal{Z}}
\nc {\bZ}{\mathbb{Z}}
\nc {\bQ}{\mathbb{Q}}
\nc {\uG} {\underline{G}}
\nc {\cB}{\mathcal{B}}
\nc{\rat}{\mathrm{rat}}
\nc {\fk}{\mathfrak{k}}
\nc {\fI}{\mathfrak{i}}
\nc {\fg} {\mathfrak{g}}
\nc {\fu} {\mathfrak{u}}
\nc {\fl} {\mathfrak{l}}
\nc {\fn} {\mathfrak{n}}
\nc {\cP} {\mathcal{P}}
\nc {\fz} {\mathfrak{z}}
\nc {\fc}{\mathfrak{c}}
\nc {\fh}{\mathfrak{h}}
\nc {\fp}{\mathfrak{p}}
\nc {\fC}{\mathcal{C}}
\nc {\ft}{\mathfrak{t}}
\nc{\tg} {\mathtt{g}}
\nc {\hfg} {\widehat{\fg}}
\nc {\hG} {\check{G}}
\nc {\bGm} {\mathbb{G}_m}
\nc{\bC}{\mathbb{C}}
\nc{\bV}{\mathbb{V}}
\nc{\bP}{\mathbb{P}}
\nc{\bA}{\mathbb{A}}
\nc{\Sl}{\mathfrak{sl}}
\nc{\ra}{\rightarrow}
\nc {\tU}{\tilde{U}}
\nc {\tSym}{\widetilde{Sym}}
\nc {\Bun}{\mathrm{Bun}}
\nc {\cA}{\mathcal{A}}
\nc {\Fun}{\mathrm{Fun}}
\nc {\crit}{\mathrm{crit}}
\nc {\Ind}{\mathrm{Ind}}
\nc {\Vac}{\mathrm{Vac}}
\nc {\gr}{\mathrm{gr}}
\nc {\ad}{\mathrm{ad}}
\nc {\Sym}{\mathrm{Sym}}
\nc {\Ram}{\mathrm{Ram}}
\nc {\FG}{\mathrm{FG}}
\nc {\Op}{\mathrm{Op}}
\nc {\Hitch}{\mathrm{Hitch}}
\nc {\fb}{\mathfrak{b}}
\nc{\cDt}{\mathcal{D}^\times}
\nc{\cDb}{\mathcal{D}_b^\times}
\nc{\cDbp}{\mathcal{D}_{b'}^\times}
\nc {\gl}{\mathfrak{gl}}
\nc {\Sp}{\mathfrak{sp}}
\nc {\So}{\mathfrak{so}}
\nc {\bR}{\mathbb{R}}
\nc {\Hom}{\mathrm{Hom}}
\nc {\Irr}{\mathrm{Irr}}
\nc {\Id}{\mathrm{Id}}
\nc {\Tr}{\mathrm{Tr}}
\nc {\rk}{\mathrm{rank}}
\nc {\rank}{\mathrm{rank}}
\nc {\cW}{\mathcal{W}}
\nc {\cI}{\mathcal{I}}
\nc {\Fr}{\mathrm{Fr}}
\nc {\ff}{\mathfrak{f}}
\nc {\LocSys}{\mathrm{LocSys}}
\nc {\Ga}{\mathrm{Ga}}
\nc {\ord}{\mathrm{ord}}
\nc{\pole}{\mathrm{pole}}
\newcommand{\C}{\mathbb{C}}
\newcommand{\dtt}{\frac{dt}{t}}
\newcommand{\n}{\nabla}
\newcommand{\nbr}{[\nabla]}
\newcommand{\Om}{\Omega}
\newcommand{\Ad}{\mathrm{Ad}}
\renewcommand{\a}{\alpha}
\renewcommand{\b}{\beta}
\newcommand{\fs}{\mathfrak{s}}
\newcommand{\Waff}{W^{\mathrm{aff}}}
\newcommand{\Gm}{\mathbb{G}_m}
\DeclareMathOperator{\GL}{GL}
\DeclareMathOperator{\SL}{SL}
\DeclareMathOperator{\SP}{Sp}
\DeclareMathOperator{\SO}{SO}
\DeclareMathOperator{\Or}{O}
\DeclareMathOperator{\Spec}{Spec}
\DeclareMathOperator{\Res}{\mathrm{Res}}
\begin{document} 
\title{Rigid connections on $\bP^1$ via the Bruhat-Tits building} 
\author{Masoud Kamgarpour}
\author{Daniel S. Sage} 

\subjclass[2010]{14D24, 20G25, 22E50, 22E67}

\address{School of Mathematics and Physics, The University of Queensland, Brisbane, QLD 4072, Australia} 
\email{masoud@uq.edu.au}

\address{Department of Mathematics, Louisiana State University, Baton
  Rouge, LA 70808, USA} 
\email{sage@math.lsu.edu}

\date{\today}

\begin{abstract} We apply the theory of fundamental strata of Bremer
  and Sage to find cohomologically rigid $G$-connections on the
  projective line, generalising the work of Frenkel and Gross. In this
  theory, one studies the leading term of a formal connection with
  respect to the Moy-Prasad filtration associated to a point in the
  Bruhat-Tits building. If the leading term is regular semisimple with
  centraliser a (not necessarily split) maximal torus $S$, then we
  have an $S$-toral connection.  In this language, the irregular
  singularity of the Frenkel-Gross connection gives rise to the
  homogeneous toral connection of minimal slope associated to the
  Coxeter torus $\cC$. In the present paper, we consider connections
  on $\Gm$ which have an irregular homogeneous $\cC$-toral singularity
  at zero of slope $i/h$, where $h$ is the Coxeter number and $i$ is a
  positive integer coprime to $h$, and a regular singularity at
  infinity with unipotent monodromy.  Our main result is the
  characterisation of all such connections which are rigid.
\end{abstract}

\keywords{Rigid connections, Frenkel-Gross connection, toral
  connection, fundamental stratum, Bruhat-Tits building, Moy-Prasad
  filtration, Coxeter number}
\maketitle 

\tableofcontents

\section{Introduction}

\subsection{Rigid connections} 
The study of rigid connections originates in the seminal work of
Riemann on Gauss's hypergeometric function \cite{Riemann}. His central
insight was that one should study Gauss's function by the
corresponding \emph{local system} on $\bP^1_\bC-\{0,1,\infty\}$,
defined as the system of holomorphic local solutions of the
hypergeometric differential equation.  This approach greatly clarified
and expanded the invariant properties of the hypergeometric functions
established by Euler, Gauss, and Kummer. Riemann's investigation was a
stunning success largely because the hypergeometric local system is
\emph{physically rigid}, i.e., determined up to isomorphism as soon as
one knows the local monodromies at the three singular points.

In modern times, the subject of rigid local systems was rejuvenated by
the influential book of Katz \cite{KatzRigid}. Katz took the point of
view that a rigid rank $n$ local system on $\bP^1-\{\textrm{$m$
  points}\}$ corresponds to an $n\times n$ first order system of
differential equations with singularities at the $m$ missing points.
He then proceeded to systematically study and classify these local
systems using the notion of middle convolution. To facilitate the
understanding of rigidity, he defined the notion of
\emph{cohomological rigidity} which, roughly speaking, means that the
local system has no infinitesimal deformations. Using deep
results of Laumon on the $\ell$-adic Fourier transform, he proved that
physical and cohomological rigidity for $\ell$-adic local systems are
equivalent. Subsequently, Bloch and Esnault \cite{BE} proved an
analogous result in the de Rham setting. Cohomological rigidity gives
rise to a practical numerical criterion for rigidity, see \S
\ref{ss:numerical}.

\subsection{Rigid $G$-connections} 
One can view an $n\times n$ first order system of differential
equation as a connection on a rank $n$ vector bundle which, in our
case, lives on $\bP^1-\{\textrm{$m$ points}\}$. Using the equivalence
between rank $n$ vector bundles and principal $\GL_n$-bundles, one
obtains a principal $\GL_n$-bundle equipped with a connection, or, for brevity, a \emph{$\GL_n$-connection}. This construction can be generalised to any algebraic group $G$, thereby obtaining \emph{$G$-connections}.

Foundational results on $G$-connections were established by Babbit and Varadarajan \cite{BV}. More recently, this subject has received renewed attention due to its application to the geometric Langlands program. Indeed, if $G$ is reductive,  $G$-connections on complex algebraic curves are precisely the \emph{geometric Langlands parameters} \cite{FrenkelBook}.\footnote{Another reason for interest in $G$-connections is that they arise naturally in $G$-analogues of non-abelian Hodge theory and the Riemann--Hilbert correspondence, cf. \cite{Boalch} where Bruhat-Tits theory also makes an appearance.} In particular, the geometric Langlands community has been interested in \emph{rigid} $G$-connections because it is expected that in this case the Langlands correspondence can be described explicitly. Another source of interest in rigid $G$-connections is the expectation that they should be, in some sense, motivic. For instance, in the $\GL_n$ case, it is known that an irreducible connection is rigid if and only if it can be reduced to the trivial connection using Fourier transforms and convolutions \cite{KatzRigid, Arinkin}. 

\begin{rem} 
Physical and cohomological rigidity make sense for $G$-connections, and it is known that physical rigidity implies cohomological rigidity; however, it is not known if the converse implication holds \cite[\S 3.2]{YunCDM}. Thus, while we have many examples of cohomologically rigid $G$-connections, many of which are conjectured to be physically rigid, we are unaware of any (proven) example of a physically rigid $G$-connection beyond type $A$. 
\end{rem} 

\begin{ntn} Henceforth, we will simply call cohomologically rigid connections rigid. 
\end{ntn}

\subsection{Constructions of rigid $G$-connections} We now review some of the methods for constructing and analysing rigid $G$-connections. 

\subsubsection{Opers} 
Roughly speaking, a $G$-oper is a $G$-connection whose underlying
bundle admits a reduction to a Borel subgroup satisfying a certain
transversality condition with respect to the connection \cite[\S
4]{FrenkelBook}. Using the notion of oper and building on the earlier
work of Deligne \cite{ExponentialSums} and Katz \cite{KatzKloosterman,
  KatzRigid}, Frenkel and Gross \cite{FG} constructed a remarkable
$G$-connection $\nabla_\FG$ on the trivial $G$-bundle on
$\bP^{1}-\{0,\infty\}$ satisfying the following properties:
\begin{enumerate} 
\item it has a regular singularity at $0$ with principal unipotent monodromy;
\item it has an irregular singularity  at $\infty$ with slope $\frac{1}{h}$, where $h$ is the Coxeter number;
\item it is cohomologically rigid;
\item it has a large differential Galois group (equal to $G$ in many cases). 
\end{enumerate} 

\subsubsection{Rigid automorphic data} The geometric Langlands program predicts a correspondence between $G$-local systems and certain automorphic data for the Langlands dual group $\hG$. One expects therefore that there is a notion of rigidity for automorphic data which corresponds to rigidity for local systems. The first example of rigid automorphic data was found by Heinloth, Ng\^o, and Yun \cite{HNY}, who used this to define the analogue of Kloosterman sheaves for general reductive groups. 
They conjectured that the de Rham version of their local system is isomorphic to the Frenkel--Gross connection $\nabla_\FG$. This conjecture was subsequently proved by \cite{Zhu}. Lam and Templier \cite{LT} then used results of \cite{HNY, Zhu} to settle mirror symmetry for minuscule flag varieties. 

Yun subsequently systematised the study of rigid automorphic data \cite{YunGalois, Yun, YunCDM}. In particular, in \cite{YunGalois}, he settled an old question of Serre on inverse Galois theory by constructing sought-after Galois representations via rigid automorphic data, again highlighting the role of rigidity in the Langlands program and related areas.  

\subsubsection{$\theta$-connections} A torsion automorphism $\theta\in \mathrm{Aut}(\fg)$ gives rise to a grading 
\begin{equation}\label{eq:torgrad}
\fg=\fg_0\oplus \cdots \fg_{m-1}. 
\end{equation}
The action of $G_0$ on $\fg_1$ and the resulting GIT quotient was
studied extensively by Vinberg and his school under the name of
\emph{$\theta$-groups}.  Yun and Chen \cite{Yun, Chen} associate to a
stable element $X\in \fg_1$ a twisted connection called a
\emph{$\theta$-connection}.\footnote{In fact, the constructions given
  by Yun and Chen are different. Conjecturally, they result in
  equivalent connections.} In general, $\nabla_X$ is a connection on
$\bGm$ which is regular singular with unipotent monodromy at $0$ and
irregular at $\infty$. If the grading is inner, so that the connection
is untwisted, then the slope at the irregular point equals $1/m$. In
\cite{Chen}, it is shown that these connections are usually rigid.
For an appropriate choice of $\theta$ (specifically the one defined by
the torsion element $\exp(\check{\rho}/h)$ in a fixed maximal torus
$T$), the corresponding $\theta$-connection $\nabla_X$ equals the
Frenkel-Gross connection $\nabla_{\FG}$.

\subsection{Our goal} 
In this paper, we construct a large class of connections satisfying
properties analogous to the Frenkel-Gross connection. We found these
connections by using the theory of fundamental strata for formal
connections introduced by Bremer and Sage
\cite{BremerSageisomonodromy, BremerSagemodspace, BSregular,
  BSminimal, Sageisaac}.  A major theme in these works is that
Moy-Prasad filtrations -- certain filtrations on the loop algebra
associated to points in the Bruhat-Tits building -- can be used to
define a new notion of a leading term for a formal
connection. Moreover, if this leading term is regular semisimple  in a
suitable sense, then the connection enjoys favourable properties.
If the centraliser of the leading term is the (not necessarily split)
maximal torus $S$, we say that the connection is $S$-toral.

Given a point $x$ in the Bruhat-Tits building $\cB$, there is an
associated Moy-Prasad filtration \cite{MP} on the loop algebra
$\fg(\bC(\!(t)\!))$, and one can speak of the leading term of $\nabla$
with respect to this filtration.  If we restrict to the standard
apartment (with respect to a fixed maximal torus $T\subset G$), then
this filtration comes from a grading on the polynomial loop algebra
$\fg(\bC[t, t^{-1}])$.  We say that $\nabla=d+A\,dt$ is
\emph{homogeneous} if $A$ is homogeneous with respect to the grading. In
this case, $A\in \fg(\bC[t, t^{-1}])$, and we can think of $\nabla$ as
a connection on the trivial bundle on $\bP^1-\{0,\infty\}$. We can
then use properties of the leading term to investigate whether
$\nabla$ is rigid.

If $x$ is chosen to be the barycentre of the fundamental alcove, then
the resulting filtration comes from the principal
grading.\footnote{This grading already appeared in the works of Kac on
  infinite dimensional Lie algebras \cite{Kac}.} Our main result is
the classification of all rigid connections that are homogeneous with
respect to the principal grading and have a regular semisimple leading
term.  The corresponding formal connections at
the irregular singularity are toral with respect to a certain elliptic
maximal torus called the Coxeter torus.   As we shall see, there are two different flavours of such rigid
connections depending on whether the slope at the irregular point is
less than or bigger than one.

If the slope is bigger than one, then we can determine the rigid
connections in a uniform manner for all simple groups $G$. These
connections are regular everywhere except at $0$, where they
have an irregular singularity of slope $(h+1)/h$. Explicitly, they are defined as follows. For each negative simple root $\alpha$, let $x_\alpha$ be a generator of the corresponding root space. 
Let $N:=\sum x_\alpha$ and let $E$ be a generator of the highest root
space. The connection in question is then given by\footnote{In Type A,
  the $\ell$-adic analogue of $\nabla$ has been studied intensively by
  Katz,  cf. \cite{KatzFinite}. Finding the $\ell$-adic analogue of $\nabla$ for general $G$ is an interesting open problem.}  
\begin{equation} 
\nabla:= d + (N+t^{-1}E)\frac{dt}{t^2}. 
\end{equation} 
Note that replacing $t^2$ by $t$ gives the Frenkel--Gross connection.

On the other hand, if
the slope is less than one, we determine rigidity by a
case-by-case analysis.  This is necessary because, in contrast to the
Frenkel-Gross situation, the monodromy at the regular singular point
is no longer the principal unipotent class.\footnote{In particular,
  the connections studied here are not necessarily in oper form.}
Therefore, specific information about nilpotent conjugacy classes in
each type is required. It turns out that if $G$ is classical, such
rigid connections are plentiful. In contrast, if $G$ is exceptional,
there is only one example aside from the Frenkel-Gross connection, and
it appears for $G=E_7$. This connection is irregular at $0$ of slope
$7/18$ and is regular singular at $\infty$ with unipotent monodromy of
type $A_2+3A_1$.

\subsubsection{Relationship to $\theta$-connections} As already noted,
$\theta$-connections have slope $1/m$ at the irregular point, whereas
the connections considered here have slopes $r/h$ , with $r$ a
positive integer relatively prime to $h$.  However, there is a strong
relationship between our construction and the approach of Yun and
Chen.  On the one hand, our methods can be used to construct
connections equivalent to those of Yun and Chen. Indeed, in our
language, (formal) $\theta$-connections are toral connections of
minimal depth associated to ``elliptic regular'' maximal tori in the
loop group.\footnote{One needs to extend the theory of toral
  connections to twisted loop groups to get those $\theta$-connections
  which are twisted connections.}  In the Frenkel-Gross case (the
simplest example of a $\theta$-connection), one obtains a toral
connection of minimal depth with respect to the Coxeter torus.  On the
other hand, the connections studied in this paper can also be obtained
by considering the principal torsion automorphism.  In future work, we
will examine a common generalisation of the work of Yun and Chen and
of this paper -- the classification of homogeneous rigid connections
whose irregular singularity is toral for other classes of elliptic
regular maximal tori.  These would correspond to variants of
$\theta$-connections where one starts with appropriate homogeneous
elements of the grading \eqref{eq:torgrad} of degree $r>1$ relatively
prime to $m$.

\subsection{Organization of the paper} In \S \ref{s:BT}, we give a
quick review of Moy-Prasad filtrations and their role in studying
formal connections, following earlier works of Bremer and Sage. In \S
\ref{s:Coxeter}, we discuss formal Coxeter connections. In particular,
we write convenient expressions for these connections and determine
such basic properties as slope, adjoint irregularity, and the local
differential Galois group. In \S \ref{s:Rigidity}, we state and prove
our main result giving the necessary and sufficient conditions for
(global) Coxeter connections to be rigid. We conclude the paper by
discussing the global differential Galois group of these connections.

\subsection{Acknowledgements} We would like to thank Willem de Graaf
for his generous help regarding questions about nilpotent orbits in
exceptional groups. We also thank the Centre International de
Rencontres Math\'ematiques (CIRM) in Luminy for hosting us during the
early stages of this project. MK is supported by an
Australian Research Council Discovery grant. DS is supported by an NSF
grant and a Simons Foundation grant.

 \section{Moy-Prasad filtrations and formal connections} \label{s:BT}
 Let $G$ be a simple complex algebraic group with Lie algebra
 $\fg$. Let $B$ be a  Borel subgroup, and $T\subset B$ a maximal torus with Lie algebras $\ft$ and $\fb$. We write $\fg_\alpha$ for the root space corresponding to a root $\alpha$.  
 Let $\cK=\bC(\!(t)\!)$ be the field of Laurent series with ring of integers $\cO=\bC[\![t]\!]$. Let $G(\cK)$ and $\fg(\cK)$ denote the loop group and algebra, respectively.  
 
 \subsection{Fundamental strata}
 Let $\cB$ be the Bruhat-Tits building of $G$; it is
a simplicial complex whose facets are in bijective correspondence with
the parahoric subgroups of the loop group $G(\cK)$.  The standard apartment $\cA$
associated to the split maximal torus $T(\cK)$ is an affine space
isomorphic to $X_*(T)\otimes_\bZ \bR$.  Given $x\in\cB$, we denote by
$G(\cK)_x$ (resp. $\fg(\cK)_x$) the parahoric subgroup (resp.
subalgebra) corresponding to the facet containing $x$.

For any $x\in\cB$, the Moy-Prasad filtration associated to $x$ is a
decreasing $\bR$-filtration 
\[
\{\fg(\cK)_{x,r}\mid
r\in\bR\}
\] of $\fg(\cK)$ by $\cO$-lattices.  The filtration satisfies
$\fg(\cK)_{x,0}=\fg(\cK)_x$ and is periodic in the sense that
$\fg(\cK)_{x,r+1}=t\fg(\cK)_{x,r}$.  Moreover, if we set
$\fg(\cK)_{x,r+}=\bigcup_{s>r}\fg(\cK)_{x,s}$, then the set of $r$ for
which $\fg(\cK)_{x,r}\ne\fg(\cK)_{x,r+}$ is a discrete subset of
$\bR$.

For our purposes, it will suffice to give the explicit definition of the filtration for
$x\in\cA$.  In this case, the filtration is determined by a grading on
$\fg(\bC[t,t^{-1}])$, with the graded subspaces given by
\begin{equation*}
\fg(\cK)_{x}(r)=\begin{cases} \ft t^r\oplus
  \bigoplus\limits_{\a(x)+m=r}\fg_\a t^m, &\text{if } r\in\bZ\\\\
\bigoplus\limits_{\a(x)+m=r}\fg_\a t^m,&\text{otherwise.}
\end{cases}
\end{equation*}

\begin{rem} The gradings associated to $x\in \cA_\bQ$ appeared in early works of Kac on infinite dimensional algebras. Thus, these  are sometimes called  Kac-Moy-Prasad gradings, cf. \cite{Chen}. 
\end{rem}

Let $\kappa$ be the Killing form for $\fg$.  Any element $X\in
\fg(\cK)$ gives rise to a continuous $\C$-linear functional on
$\fg(\cK)$ via $Y\mapsto \Res\kappa(Y,X)\dtt$.  This identification
induces an isomorphism
\begin{equation*} 
(\fg(\cK)_{x,r}/\fg(\cK)_{x,r+})^\vee\cong \fg(\cK)_{x,-r}/\fg(\cK)_{x,-r+}.
\end{equation*}

\begin{defe} A \emph{$G$-stratum of depth $r$} is a triple $(x,r,\b)$ with $x\in\cB$,
$r\ge 0$, and $\b\in(\fg(\cK)_{x,r}/\fg(\cK)_{x,r+})^\vee$. We say that this strata is \emph{based} at $x$. 
\end{defe} 

 Any
element of the corresponding $\fg(\cK)_{x,-r+}$-coset is called a
representative of $\b$. If $\hat{\beta}$ is a representative, we will abuse notation to refer to the stratum as $(x,r,\hat{\beta})$.     If $x\in\cA$, there is a unique homogeneous
representative $\b^\flat\in \fg(\cK)_{x}(-r)$. 

\begin{defe} The stratum is called
\emph{fundamental} if every representative is non-nilpotent. 
\end{defe} 

 When
$x\in\cA$, it suffices to check that $\b^\flat$ is non-nilpotent.

\subsection{Leading term of formal connections} 
 A formal flat
$G$-bundle $(\cE,\n)$ (or simply a formal $G$-connection) is a principal $G$-bundle $\cE$ on the formal punctured disk $\cDt:=\Spec(\cK)$
endowed with a connection $\n$ (which is automatically flat).  Upon
choosing a trivialisation, the connection may be written in terms of
its matrix 
\[
\nbr_\phi \in \Om^1_{F}(\fg(\cK))
\]
 via $\n=d+\nbr_\phi$.
If one changes the trivialisation by an element $g\in G(\cK)$, the
matrix changes by the \emph{gauge action}:
\begin{equation}\label{gauge}
\nbr_{g \phi} = \mathrm{Gauge}_g(\nbr_\phi)= \Ad_g (\nbr_\phi) - (dg) g^{-1}.
\end{equation}
Accordingly, the set of isomorphism classes of flat $G$-bundles on
$\cDt$ is isomorphic to the quotient $\fg(\cK)\dtt/G(\cK)$, where the
loop group $G(\cK)$ acts by the gauge action.

\begin{defe} If $x\in\cA\cong\ft_\bR$, we say that $(\cE,\n)$ contains the stratum $(x,r,\b)$
with respect to the trivialisation $\phi$ if $[\n]_\phi/(dt/t)-x\in
\fg(\cK)_{x,-r}$ and is a representative for $\b$.  
\end{defe}

We refer the reader to 
\cite{BSminimal} for the definition for an arbitrary point $x\in \cB$. If $\nabla$ contains the stratum $(x,r,\b)$, then we can think of $\b$ as the leading term of $\nabla$ with respect to $x$. The following theorem shows that a non-nilpotent leading term contains meaningful information about $\nabla$:\footnote{This theorem and
  Theorem~\ref{formaltypes} remain true for connected reductive $G$.}

\begin{thm}[{\cite[Theorem 2.14]{BSminimal}}] \label{t:fundamental} 
 Every flat $G$-bundle $(\cE, \n)$ contains a fundamental
  stratum $(x,r,\b)$, where $x$ is in the closure of the fundamental alcove
  $C\subset\cA$ and $r\in\bQ$; the
  depth $r$ is positive if and only if $(\cE, \n)$ is irregular
  singular.  Moreover, the following statements hold.
\begin{enumerate}
\item If $(\cE, \n)$ contains the stratum $(y,r', \b')$, then
$r' \ge r$.  
\item  If $(\cE,\n)$ is irregular singular, a stratum
  $(y,r', \b')$ contained in $(\cE, \n)$ is fundamental if and only if
  $r' = r$.
\end{enumerate}
\end{thm}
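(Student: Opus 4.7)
The plan is to isolate a natural gauge-invariant numerical invariant --- the minimum depth $r$ of any stratum contained in $(\cE,\nabla)$ --- and show it is realised by a fundamental stratum based in the closure $\overline{C}$ of the fundamental alcove; parts (1) and (2) then amount to the statement that fundamentality characterises the strata achieving this minimum.

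I would begin by fixing a trivialization $\phi$, writing the connection matrix as $A\,dt/t$ with $A\in\fg(\cK)$, and for each $x\in\cA$ defining $d_\phi(x)$ to be the smallest $r\ge 0$ with $A-x\in\fg(\cK)_{x,-r}$. The explicit formula for $\fg(\cK)_{x}(r)$ given in the excerpt shows that $d_\phi$ is piecewise affine on $\cA$, determined by finitely many affine-root conditions on the Laurent coefficients of $A$, and takes rational values whose denominators are bounded on each facet. Since $\overline{C}$ is a compact simplex, $d_\phi$ attains its infimum there at a rational point $x$ with some minimal value $r$. Using translations by the cocharacter lattice $X_*(T)\subset T(\cK)$ and the finite Weyl group --- both acting inside $G(\cK)$ by gauge --- one checks that this $r$ is the infimum of $d_\phi$ over all of $\cA$; applying the building-theoretic fact that every point of $\cB$ is $G(\cK)$-conjugate to a point of $\overline{C}$, while tracking how the $(dg)g^{-1}$ term of \eqref{gauge} interacts with the filtration, one further identifies $r$ with the infimum over all $y\in\cB$ and all trivializations, making it a genuine gauge invariant of $(\cE,\nabla)$. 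By construction $r=0$ precisely when some gauge-equivalent connection lies in a parahoric subalgebra, which is the standard characterisation of regular singularity, so $r>0$ iff $\nabla$ is irregular.

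The main remaining step, and the principal difficulty, is to show that when $r>0$ the homogeneous representative $\beta^\flat\in\fg(\cK)_{x}(-r)$ is non-nilpotent. I would argue by contradiction: if $\beta^\flat$ were nilpotent, complete it via Jacobson--Morozov to an $\Sl_2$-triple $(e,h,f)$ with $e=\beta^\flat$, choosing $h$ so that the $\ad(h)$-eigenspace decomposition refines the Moy-Prasad grading at $x$. A gauge transformation by a one-parameter subgroup built from $h$ --- the analogue of a Levelt--Turrittin move in this setting --- should then conjugate the leading term into a strictly deeper filtration piece, with the $(dg)g^{-1}$ contribution controlled because $r>0$ forces the relevant terms into pieces of strictly negative depth where the derivative correction is subdominant. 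This would produce a stratum of depth $<r$, contradicting minimality and proving $\beta^\flat$ is non-nilpotent. The same descent argument simultaneously yields parts (1) and (2): any contained stratum of depth $>r$ with non-nilpotent leading term could be similarly improved, yet one cannot go below $r$, so all fundamental contained strata have depth exactly $r$, and conversely any depth-$r$ stratum must already be fundamental. The delicate technical point is verifying that the descent gauge neither introduces a compensating new leading term at the same depth nor moves the basepoint out of $\overline{C}$; this ultimately reduces to representation-theoretic properties of the graded piece $\fg(\cK)_x(-r)$ as a module over the finite-dimensional Lie algebra $\fg(\cK)_{x,0}/\fg(\cK)_{x,0+}$.
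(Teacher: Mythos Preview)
The paper does not contain its own proof of this theorem: it is stated with attribution to \cite[Theorem~2.14]{BSminimal} and then used as a black box. There is therefore nothing in the present paper to compare your proposal against; a genuine comparison would require consulting the Bremer--Sage paper itself.

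That said, a brief comment on your sketch. The overall strategy --- minimise depth over basepoints, then show a minimising stratum must be non-nilpotent via a Jacobson--Morozov descent --- is the natural adaptation of the Moy--Prasad unrefined minimal $K$-type argument from $p$-adic representation theory, and is indeed the spirit of the Bremer--Sage approach. However, your treatment of part~(2) contains a genuine gap. You write that ``any contained stratum of depth $>r$ with non-nilpotent leading term could be similarly improved,'' but the descent mechanism you set up applies precisely to \emph{nilpotent} leading terms, not to non-nilpotent ones; so this sentence does not establish the direction ``fundamental $\Rightarrow$ depth $=r$''. What is actually needed is a separate argument that a fundamental stratum gives an \emph{upper} bound on the slope as well as a lower one --- typically done by pulling back along a ramified cover to diagonalise the connection, or by relating the depth of a fundamental stratum to the classical (Katz) slope via the adjoint representation. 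Without such an argument, two fundamental strata of different depths are not yet ruled out.
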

As a consequence, one can define the slope of $\n$ as the depth of any
fundamental stratum it contains.

\subsection{Regular strata and toral flat $G$-bundles}\label{ss:toral}

We will also need some results on flat $G$-bundles which contain a
\emph{regular stratum}, a kind of stratum that satisfies a graded
version of regular semisimplicity.  For convenience, we will only
describe the theory for strata based at points in $\cA$.

Let $S\subset G(\cK)$ be a (in general, non-split) maximal torus, and
let $\fs \subset \fg(\cK)$ be the associated Cartan subalgebra.  We
denote the unique Moy-Prasad filtration on $\fs$ by $\{\fs_r\}$.  More
explicitly, we first observe that if $S$ is split, then this is just
the usual degree filtration.  In the general case, if $\cK_b$ is a
splitting field for $S$, then $\fs_r$ consists of the Galois fixed
points of $\fs(\cK_b)_r$.  Note that $\fs_r\ne\fs_{r+}$ implies that
$r\in\bZ\frac{1}{b}$. We remark that the filtration on $\fs$
can be defined in terms of a grading, whose graded pieces we denote by
$\fs(r)$.

\begin{defe} \label{d:fundamental}
\begin{enumerate} 
\item[(i)] A point $x \in \cA$ is called \emph{compatible} (resp. \emph{graded compatible}) with $\fs$ if
$\fs_r=\fg(\cK)_{x,r}\cap \fs$  (resp. $\fs(r)=\fg(\cK)_{x,r}\cap \fs$) for all $r\in\bR$. 
\item[(ii)] A fundamental stratum
  $(x,r,\beta)$ with $x\in\cA$ and $r>0$ is an \emph{$S$-regular stratum} if $x$ is compatible with
  $S$ and $S$ equals the connected centralizer of $\b^\flat$.
  \end{enumerate} 
\end{defe}
In fact, every representative of $\b$ will be regular semisimple with
connected centralizer a conjugate of $S$.

\begin{defe}\label{d:toral}  If $(\cE,\n)$ contains the $S$-regular stratum
  $(x,r,\b)$, we say that $(\cE,\n)$ is \emph{$S$-toral}.
\end{defe}

Recall that the conjugacy classes of maximal tori in $G(\cK)$ are in
one-to-one correspondence with conjugacy classes in the Weyl group $W$
\cite{KL88}.  It turns out that there exists an $S$-toral flat
$G$-bundle of slope $r$ if and only if $S$ corresponds to a regular
conjugacy class of $W$ and $e^{2\pi i r}$ is a regular eigenvalue for
this class~\cite[Corollary 4.10]{BSregular}.  
Equivalently,
$\fs(-r)$ contains a regular semisimple element.

An important feature of $S$-toral flat $G$-bundles is that they can be
``diagonalised'' into $\fs$.  To be more precise, suppose that there
exists an $S$-regular stratum of depth $r$.    Let $\cA(S,r)$ be the open subset of
$\bigoplus_{j\in[-r,0]}\fs(j)$ whose leading component (i.e., the
component in $\fs(-r)$) is regular semisimple.  This is called the set
of \emph{$S$-formal types} of depth $r$.  Let $\Waff_S=N(S)/S_0$ be
the relative affine Weyl group of $S$; it is
the semidirect product of the relative Weyl group $W_S$ and the free
abelian group $S/S_0$.  The group $\Waff_S$ acts on $\cA(S,r)$.  The
action of $W_S$ is the restriction of the obvious linear action while
$S/S_0$ acts by translations on $\fs(0)$.
\begin{thm}{\cite[Corollary 5.14]{BSregular}}\label{formaltypes} If $(\cE,\n)$ is $S$-toral of
  slope $r$, then $\n$ is gauge-equivalent to a connection with matrix
  in $\cA(S,r)\dtt$.  Moreover, the moduli space of $S$-toral flat
  $G$-bundles of slope $r$ is given by $\cA(S,r)/\Waff_S$.
\end{thm}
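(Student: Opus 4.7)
I would prove both assertions by a Hensel-style iteration whose engine is a cohomological vanishing extracted from $S$-regularity. Fix a trivialisation so that $\n = d + A\,\dtt$ with $A \in \fg(\cK)_{x,-r}$ representing the $S$-regular stratum; thus $A = \b^\flat + A_+$ with $\b^\flat \in \fs(-r)$ regular semisimple and $A_+ \in \fg(\cK)_{x,-r+}$. Because $x$ is compatible with $\fs$, each Moy--Prasad graded piece $\fg(\cK)_{x,s}/\fg(\cK)_{x,s+}$ is an $\fs$-module, and $S$-regularity of $\b^\flat$ yields the key fact that
\[
\ad(\b^\flat): \fg(\cK)_{x,s}/\fg(\cK)_{x,s+} \longrightarrow \fg(\cK)_{x,s-r}/\fg(\cK)_{x,s-r+}
\]
has kernel $\fs(s)$ (its intersection with the source) and image a direct complement to $\fs(s-r)$ for every $s$. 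I would establish this by passing to a splitting field $\cK_b$ for $S$, decomposing each graded piece into eigenspaces for $\ad(\fs)$, and noting that $\b^\flat$ acts invertibly on every nontrivial eigenspace; Galois descent from $\cK_b$ handles the non-split case.

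Using this, inductively build $Y_1, Y_2, \ldots$ with $Y_i \in \fg(\cK)_{x,s_i}$ and $s_i \nearrow \infty$ such that gauging by $g_N := \exp(Y_N)\cdots\exp(Y_1)$ leaves a matrix in $\fs + \fg(\cK)_{x, s_N - r+}$. At each step one solves $[\b^\flat, Y] \equiv -E \pmod{\fs(s-r) + \fg(\cK)_{x,s-r+}}$ for the current error $E$; the derivative $-t\,dY/dt$ introduced by the gauge formula lies in $\fg(\cK)_{x,s_i}$ and thus does not interfere modulo $\fg(\cK)_{x,s_i-r+}$. The product $\prod_i \exp(Y_i)$ converges in $G(\cK)_x$ because its tails lie in arbitrarily deep Moy--Prasad subgroups, and the limit gauges $A$ into $\fs \cap \fg(\cK)_{x,-r}$. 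A second, simpler round of gauges --- now by $\exp(Z)$ with $Z \in \fs(j)$, $j > 0$, which changes $A$ only by $-t\,dZ/dt$ (a nonzero scalar multiple of $Z$ on each nonzero grade, by the same split/descent argument) --- eliminates all positive grades and lands $A$ in $\cA(S,r)\,\dtt$, proving the first assertion.

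For the moduli statement, the forward direction is straightforward: $\Waff_S$ acts on $\cA(S,r)$ via gauge by $N(S) \subset G(\cK)$, with $W_S = N(S)/S$ acting linearly on the graded pieces $\fs(j)$ and $S/S_0$ acting on $\fs(0)$ by the translation $A \mapsto A - t\,h'h^{-1}$ for a representative $h$. For the converse, suppose $A, A' \in \cA(S,r)\,\dtt$ are gauge-equivalent via $g \in G(\cK)$. Factor $g = n\cdot u$ with $n$ lifting a coset of $N(S)/S_0$ and $u \in G(\cK)_x$ a ``small'' factor; since conjugation by $u$ must send $\b^\flat$ to another regular semisimple element of $\fs(-r)$ with centraliser $\fs$, the kernel computation above forces the non-$\fs$ part of $u$ to lie in successively deeper Moy--Prasad levels. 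A fixed-point iteration that mirrors the diagonalisation then shows $u \in S_0$, so $A$ and $A'$ differ by the image of $n$ in $\Waff_S$.

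The subtlest point is this converse: ruling out gauge equivalences that do not come from $\Waff_S$. It demands that the regularity/centraliser analysis be propagated through every level of the Moy--Prasad filtration and, in the non-split case, reconciled with Galois descent from $\cK_b$; once that is in place, the remaining steps are a standard contraction-mapping/Hensel argument.
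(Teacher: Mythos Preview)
The paper does not prove this theorem; it is quoted as Corollary~5.14 of \cite{BSregular} and used as a black box, so there is no proof here to compare your sketch against. Your outline---diagonalising by a Hensel-type iteration driven by the fact that $\ad(\b^\flat)$ is invertible on the non-$\fs$ part of each Moy--Prasad graded piece, then killing positive grades inside $\fs$, and finally showing that any residual gauge equivalence between elements of $\cA(S,r)\dtt$ comes from $\Waff_S$---is the expected shape of such a diagonalisation argument and is consistent with the strategy of the cited reference; but the details, particularly the Galois-descent step in the non-split case and the uniqueness (moduli) direction, would have to be verified against \cite{BSregular} directly rather than against anything in the present paper.
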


\section{Formal Coxeter connections} \label{s:Coxeter} 
Recall that $G$ is a simple group over $\bC$, $B$  a Borel subgroup, and  $T$  a maximal torus. Let $\Phi$ denote the set of roots of $G$ and $\Delta\subset \Phi$  the subset  of simple roots. 
Let $\Phi^+$ and $\Phi^-$ denote the set of positive and negative roots, respectively. For each root $\alpha$,  let $\fg_\alpha\subset \fg$ denote the corresponding root subspace. Following \cite[\S 5]{FG}, we choose, once and for all, an ``affine pinning'' for $G$, i.e.,
\begin{enumerate} 
\item  a generator $X_{\alpha}\in \fg_{\alpha}$ for each  negative simple root $\alpha\in -\Delta$; 
\item a generator $E$ for the root space $\fg_\theta$ associated to the highest root $\theta$. 
\end{enumerate} 

 Using this data, we construct  formal connections that turn out to be
 homogeneous toral connections (Definition \ref{d:toral}) associated to
 a certain torus, called the Coxeter torus. For brevity, we refer to
 these connections as formal Coxeter connections. As we shall see, the gauge equivalence class of these connections depends only mildly on the above choices.

\subsection{Recollections on some results of Kostant} The aim of this subsection is to recall some results of \cite{Kostant}, in a format convenient for our purposes. Let $\check{\rho}\in \ft$ denote the sum of fundamental coweights and $x_0:=\check{\rho}/h \in \ft$. Consider the adjoint action of the torsion element $\exp(x_0)=e^{2\pi i x_0}\in T$ on $\fg$.  The resulting eigenspaces define a periodic grading
\begin{equation}\label{eq:grading} 
\fg=\bigoplus_{i\in \bZ/{h\bZ}} \fg_i,
\end{equation} 
where $\fg_0=\ft$, and for $i\in \{1, 2, \cdots, h-1\}$, $\fg_i$ is direct sum of root spaces 
\begin{equation} \label{eq:pieces}
\fg_i=\big(\bigoplus_{ \mathrm{ht}(\alpha)=i-h} \fg_\alpha\big) \oplus \big(\bigoplus_{\mathrm{ht}(\alpha)=i} \fg_\alpha\big).
\end{equation}

Next, let 
\begin{equation}\displaystyle N_1:=\sum_{\alpha\in -\Delta}   X_{\alpha}\in \fg_{-1}
\end{equation}
and $E_1=E$, where we recall that $E$ is a generator of the root space
for the highest root $\theta$.  Then $N_1$ is a principal (=regular)
nilpotent element in $\fg$.  Kostant proved that the element
$N_1+E_1\in \fg$, is regular semisimple, so its centraliser is a
maximal torus, denoted by $S$. Let $\fs:=\on{Lie}(S)$ denote the
corresponding Cartan subalgebra of $\fg$. The grading
\eqref{eq:grading} induces a grading
\begin{equation} 
\fs = \fs_0 \oplus \fs_1\oplus \cdots \oplus \fs_{h-1}.
\end{equation} 
Kostant proved that $\fs_0=0$ and $\fs_i$ contains a regular semisimple element if and only if $\gcd(i,h)=1$, in which case, $\dim(\fs_r)=1$. 

In particular, $N_1+E_1$ is the unique, up to scalar, nonzero element in $\fs_{-1}$. Moreover, if $A_r$ is a (regular semisimple) generator of $\fs_{-r}$,  where $\gcd(r,h)=1$, then there exists generators $X_\alpha$ of root spaces $\fg_\alpha$ comprising $\fg_{-r}$ \eqref{eq:pieces}, and a decomposition $A_r=N_r+E_r$ where 
\begin{equation} \label{eq:NrEr}
N_r=\sum_{\mathrm{ht}(\alpha)=-r} X_\alpha , \quad \quad \quad E_r=\sum_{\mathrm{ht}(\alpha)=h-r} X_\alpha.
\end{equation} 

\begin{exam} \label{e:example} For $\fg=\Sl_n$, let $N_1$ denote the matrix with $1$'s on the subdiagonal and zeros everywhere else, and let $E_1$ be the matrix with a $1$ on the top right hand corner and $0$'s everywhere else. We can take $A_r$ to be $A_1^r$. Then $N_r$ is  the matrix with $1$'s on the $r^\mathrm{th}$ subdiagonal and $0$'s everywhere else, and $E_r$ is the matrix with $1$'s on the $(n-r)^{\mathrm{th}}$ superdiagonal and $0$'s everywhere else. For instance, for $n=5$ we have 
\[
N_2=
 \begin{pmatrix} 
 0 & 0 & 0 & 0 & 0 \\
 0 & 0 & 0 & 0 & 0 \\
1 & 0 & 0 &  0 & 0 \\
0 & 1 & 0 & 0 & 0 \\
0 & 0 & 1 &  0 & 0\\
\end{pmatrix} 
 \qquad \textrm{and} \qquad 
 E_2=
  \begin{pmatrix} 
 0 & 0 & 0 & 1 & 0 \\
 0 & 0 & 0 & 0 & 1 \\
0 & 0 & 0 &  0 & 0 \\
0 & 0 & 0 & 0 & 0 \\
0 & 0 & 0 &  0 & 0\\
\end{pmatrix}. 
 \]
\end{exam} 
 We also remark that given any generators $X_\alpha$ for the root spaces of heights $-r$ and $h-r$, the resulting $N_r+E_r$ can be obtained from an appropriate affine pinning.  Indeed, Kostant shows that a choice of such $X_\alpha$'s is just an affine pinning with respect to a different choice of positive roots.

\subsection{The Coxeter Cartan subalgebra and its graded pieces} We now transport the above results to the setting of loop algebras. Let $x_I$ denote the barycentre  of the fundamental alcove of the standard apartment $\cA$. 
The associated Kac-Moy-Prasad grading is given by
\begin{equation}\label{eq:CoxeterGrading}
\fg(\bC[t,t^{-1}])= \bigoplus_{i\in \bZ} \fg(\cK)_{x_I}(i/h).
\end{equation}
In fact, this is a renormalisation of the \emph{principal grading} of the polynomial loop algebra \cite[\S 14]{Kac}, \cite[Example 3.1]{Chen}, where the degrees have been divided by $h$.

\begin{ntn} By Kostant's theorem, the element  $N_1+t^{-1}E_1\in \fg(\cK)_{x_I}(-1/h)$ is regular semisimple. Following \cite{GKM}, we call its centraliser the \emph{Coxeter torus} and denote it by $\cC$. We call $\fc:=\mathrm{Lie}(\cC)$ the \emph{Coxeter Cartan} subalgebra.  The reason for this terminology is that under the bijection between conjugacy classes of maximal tori in the loop group to conjugacy classes in the Weyl group (cf. \cite{KL88}), the class of $\cC$ maps to the Coxeter class. 
\end{ntn} 

Next, the grading \eqref{eq:CoxeterGrading} induces a grading 
\[
\fc\cap \fg(\bC[t,t^{-1}]) = \bigoplus_{i\in \bZ} \fc(i/h).
\]
This is in fact the canonical Kac-Moy-Prasad grading of $\fc$. In particular, we see that $x_I$ is graded compatible with $\fc$ in the sense of Definition \ref{d:fundamental}. 

\subsubsection{Relationship to the grading \eqref{eq:grading}} As noted in \cite{RY} and \cite{Chen}, there is a  dictionary between Moy-Prasad filtrations and periodic gradings of $\fg$. In the present setting, this means that 
we have a canonical isomorphism between the graded spaces of the principal grading and the grading discussed in the previous subsection, i.e., 
\[
\fg_i\simeq \fg(\cK)_{x_I}({i/h}) \quad \quad i\in \bZ. 
\] 
This observation allows us to transport the results of the previous subsection to the current setting. 
Thus, we find that $\fc({i/h})$ contains a regular semisimple element if and only if $\gcd(i,h)=1$, in which case $\dim(\fc({i/h}))=1$. Moreover,  $N_r+t^{-1}E_r$ is a generator for $\fg(\cK)_{x_I}({-r/h})$ for all $r$ satisfying $1\leq r\leq h$ and $\gcd(r,h)=1$. 

\subsection{Homogeneous $\cC$-toral connections} In the previous subsection, we explained that the canonical Kac-Moy-Prasad grading of the Coxeter Cartan subalgebra $\fc$ is compatible with the point $x_I\in \cA_\bQ$ and that the regular semisimple elements in the graded pieces are exactly of the form $t^{-m}(N_r+t^{-1}E_r)$ with $m, r\in \bZ$, $1\leq r\leq h$ and $\gcd(r,h)=1$. Thus, if we further assume $m\geq 0$, then the strata  
\[
\Big(x_I, \, \, m+r/h, \,\,  t^{-m}(N_r+t^{-1}E_r)\Big)
\]
are, up to rescaling the third entry by an element of $\bC^\times$, the $\cC$-regular strata based at $x_I$ (Definition \ref{d:fundamental}).  
In view of Definition \ref{d:toral}, we find: 

\begin{lem} The homogeneous $\cC$-toral connections based at $x_I$ are the formal connections defined by 
\begin{equation} 
\boxed{\nabla_{r,m}^\lambda :=d + t^{-m}\lambda(N_r+t^{-1}E_r)\frac{dt}{t}}
\end{equation} 
where 
\[
 \lambda\in \bC^\times,\, m\in \bZ_{\geq 0},\, r\in \bZ\cap [1,h), \, \gcd(r,h)=1.  
\]
\end{lem}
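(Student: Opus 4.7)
The plan is to unpack the two definitions involved and match them against the classification of graded regular semisimple elements of $\fc$ obtained in the previous subsection. Writing $\nabla = d + A\,\frac{dt}{t}$, the homogeneity hypothesis (with respect to the grading \eqref{eq:CoxeterGrading}) says that $A$ lies in a single graded piece $\fg(\cK)_{x_I}(-r')$ for some $r' \in \frac{1}{h}\bZ_{\geq 0}$, while the hypothesis of being $\cC$-toral based at $x_I$ says that $\nabla$ contains a $\cC$-regular stratum of the form $(x_I, r', \beta)$ in the sense of Definitions~\ref{d:fundamental} and~\ref{d:toral}.

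For the forward direction, suppose $\nabla$ contains such a stratum. By definition of containment, $A - x_I \in \fg(\cK)_{x_I, -r'}$, and since $x_I \in \fg(\cK)_{x_I, 0} \subset \fg(\cK)_{x_I, -r'}$, we have $A \in \fg(\cK)_{x_I, -r'}$; combined with homogeneity, $A$ actually lies in the graded piece $\fg(\cK)_{x_I}(-r')$ and is itself the homogeneous representative $\beta^\flat$. Regularity of the stratum forces the connected centraliser of $A$ to be $\cC$, so $A \in \fc$, and graded compatibility of $x_I$ with $\fc$ places $A$ in $\fc(-r')$ as a regular semisimple element. By the transport of Kostant's theorem recorded in the previous subsection, a graded piece $\fc(-r')$ admits a regular semisimple element exactly when $r' = m + r/h$ with $m \in \bZ_{\geq 0}$, $r \in \bZ \cap [1, h)$, and $\gcd(r, h) = 1$; in that case $\fc(-r')$ is the complex line spanned by $t^{-m}(N_r + t^{-1} E_r)$. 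Hence $A = \lambda t^{-m}(N_r + t^{-1} E_r)$ for some $\lambda \in \bC^\times$, i.e., $\nabla = \nabla_{r, m}^\lambda$.

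For the converse, given $\nabla_{r, m}^\lambda$ with parameters as in the statement, the connection matrix $A = \lambda t^{-m}(N_r + t^{-1} E_r)$ is homogeneous of degree $-r'$ with $r' = m + r/h > 0$, lies in $\fc(-r')$, and is regular semisimple with connected centraliser $\cC$ by the same loop-algebraic Kostant result. The triple $(x_I, r', A)$ is therefore a $\cC$-regular stratum contained in $\nabla$, so $\nabla$ is homogeneous and $\cC$-toral based at $x_I$. No significant obstacle arises: all of the nontrivial input---graded compatibility of $x_I$ with $\fc$ and the description of regular semisimple elements in the graded components of $\fc$---has already been recorded, and the proof simply assembles this with the definitions of containment and $\cC$-regularity.
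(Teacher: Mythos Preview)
Your argument is correct and follows the same route as the paper: the paper's ``proof'' is the paragraph immediately preceding the lemma, which classifies the $\cC$-regular strata based at $x_I$ (via the transport of Kostant's results) and then simply invokes Definition~\ref{d:toral}, and your proof is a careful unpacking of exactly this. The only point you leave implicit is why the depth $r'$ of the contained stratum must coincide with the degree of the homogeneous element $A$; this is harmless, since if $A$ had degree strictly greater than $-r'$ then $\beta^\flat$ (the degree $-r'$ component of $A-x_I$) would vanish and the stratum would fail to be fundamental.
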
 

Here, the term homogeneous $\cC$-toral connection means that the $\cC$-formal type (as defined in \S \ref{ss:toral}) is homogeneous. We remark that $N_r$ and $E_r$ are only determined by the affine pinning up to a common nonzero scalar.  Thus, in our notation, we have fixed one such choice. 
(As explained in \S 3.1, an appropriate choice of affine pinning allows one to take $N_r$ and $E_r$ to be any elements of heights $-r$ and $h-r$ with nonzero component in each root space of these heights.) 

\begin{ntn} For brevity, we refer to the $\nabla_{r,m}^\lambda$'s as \emph{formal Coxeter connections} and write $\nabla_{r,m}$ for $\nabla_{r,m}^1$. 
\end{ntn} 

We note that the restriction of the Frenkel-Gross connection $\nabla_{\FG}$ to the formal neighbourhood of the irregular singular point is  $\nabla_{1,0}^{-1}$.  

\begin{prop} \label{p:irregularity} 
The connection $\nabla_{r,m}^\lambda$ is irregular of slope $r/h+m$ with adjoint irregularity 
$(r+mh)\ell$, where $\ell$ is the rank of $G$. 
\end{prop}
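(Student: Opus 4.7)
Both statements will follow from the $\cC$-toral structure of $\nabla_{r,m}^\lambda$ made explicit just above the lemma.

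For the slope, observe that the triple
$(x_I,\, m+r/h,\, \lambda t^{-m}(N_r+t^{-1}E_r))$
is, by construction, a $\cC$-regular stratum contained in $\nabla_{r,m}^\lambda$. By Kostant's theorem, the leading term $N_r+t^{-1}E_r$ is regular semisimple, hence in particular non-nilpotent, so this stratum is fundamental. Theorem~\ref{t:fundamental} then identifies $\slope(\nabla_{r,m}^\lambda)$ with the depth $m+r/h$ of the stratum.

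For the adjoint irregularity, I would base-change to the degree-$h$ splitting extension $\cK_h:=\bC((u))$ of the Coxeter torus, with $u^h=t$, and apply a gauge transformation conjugating the matrix of $\nabla_{r,m}^\lambda$ into $\ft(\cK_h)$. Explicitly, in the $\SL_2$ case the diagonalisation of $t^{-m}(N_1+t^{-1}E_1)$ has eigenvalues of $u$-adic valuation $-(mh+r)$; more generally, the conjugated matrix $M\in\ft(\cK_h)$ has regular semisimple leading component of $u$-adic valuation $-(mh+r)$, because Kostant's theorem pins down $\fc(-r/h)$ as one-dimensional and spanned by a regular semisimple element. The adjoint connection then splits as
\[
\ft(\cK_h)\;\oplus\;\bigoplus_{\alpha\in\Phi}\fg_\alpha(\cK_h),
\]
with the Cartan summand trivial in the adjoint action (hence contributing $0$ to the irregularity) and each root summand the rank-one scalar connection $d+\alpha(M)\dtt$. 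Regular semisimplicity forces $\alpha(M)\neq 0$ at leading order for every $\alpha\in\Phi$, so each such piece has slope $mh+r$ in the $u$-variable; summing over the $|\Phi|=h\ell$ roots and dividing by $h$ (to pass from $\Irr_{\cK_h}$ back to $\Irr_\cK$, as one checks on rank-one examples) yields
\[
\Irr(\ad\nabla_{r,m}^\lambda)\;=\;\frac{|\Phi|(mh+r)}{h}\;=\;(r+mh)\ell.
\]

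The principal obstacle is the bookkeeping in passing to $\cK_h$: one must match the Kac-Moy-Prasad grading on $\fc$ with the standard $u$-adic filtration on $\ft(\cK_h)$ and confirm that no root evaluation $\alpha(M)$ has $u$-adic valuation exceeding the prediction from the regular semisimple leading term. This is ensured by Kostant's description of $\fc(-r/h)$, but writing out the identification explicitly, together with the scaling rule for irregularity under ramified base change and the classical identity $|\Phi|=h\ell$ for an irreducible root system, is where the actual work lies.
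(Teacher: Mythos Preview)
Your argument for the slope is exactly the paper's: identify the $\cC$-regular (hence fundamental) stratum of depth $m+r/h$ and invoke Theorem~\ref{t:fundamental}.

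For the adjoint irregularity you take a different route. The paper simply cites \cite[Lemma~19]{KS} as a black box, whereas you sketch a direct computation: pull back along the degree-$h$ cover $u^h=t$ splitting $\cC$, gauge into the split Cartan, decompose the adjoint representation into the $\ell$-dimensional trivial Cartan summand plus the $|\Phi|=h\ell$ root lines, and use that a rank-one connection $d+\alpha(M)\tfrac{du}{u}$ with $\val_u\alpha(M)=-(mh+r)$ has slope $mh+r$; then divide by $h$ for the base change. This is correct, and in fact the paper's footnote after the proof points to precisely this alternative (``similar considerations to \cite[\S5]{FG}''). What the citation buys is brevity; what your approach buys is self-containment and an explicit mechanism.

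Two small points of care in your sketch. First, you write ``gauge transformation conjugating the matrix into $\ft(\cK_h)$'': a gauge change is not a pure conjugation, and the correction term $-(dg)g^{-1}$ (for $g=\check\rho(u)$, say) lands in $\ft\tfrac{du}{u}$ with $u$-adic valuation $0$; since $mh+r>0$ this does not disturb the leading term, but it is worth saying so. Second, the element $N_r+E_r$ is regular semisimple with centraliser the Cartan $\fs$ of \S3.1, not $\ft$; one further constant conjugation (or simply working in the $\ad(N_r+E_r)$-eigenbasis) is needed before the root-line decomposition applies. Neither point affects the outcome.
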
 
  
  \begin{proof} Since the connection contains a fundamental stratum of depth $m+r/h$, Theorem \ref{t:fundamental} implies that the slope is $m+r/h$. The statement about adjoint irregularity follows from \cite[Lemma 19]{KS}.\footnote{One can also prove these facts using similar considerations to \cite[\S 5]{FG}.}
  
  \end{proof} 
  
  \subsection{Uniqueness} Recall that to define the formal Coxeter connections $\nabla_{r,m}^\lambda$, we fixed an affine pinning of $G$. We will now discuss to what extent the connections depend on this choice.  Let us choose another affine pinning $T'$, $B'$, and $X_\alpha'$, $\alpha\in -\Delta\cup \{\theta\}$. Let $N_1'+E_1'$ denote the  resulting regular semisimple element. By \cite[Theorem 6.2]{Kostant}, there exists $g\in G$ and $\lambda\in \bC^\times$ such that the adjoint action by $g$ takes $T'$ to $T$, $B'$ to $B$, the root spaces with respect to $T'$ to those with respect to $T$, and $X_\alpha'$ to $\lambda X_\alpha$. This implies that 
\begin{equation} 
\mathrm{Ad}_g(N_1'+t^{-1}E_1') = \lambda(N_1+t^{-1}E_1). 
\end{equation} 
Moreover, for each $r$ relatively prime to $h$, we can choose $N_r'$ and $E_r'$ such that the equation
\[
\mathrm{Ad}_g(N_r'+t^{-1}E_r')=\lambda(N_r+t^{-1}E_r)
\]  
holds as well. 

Now, let $\nabla_{r,m}':=d + t^{-m}(N_r'+t^{-1}E_r')\frac{dt}{t}$. The above discussion implies: 

\begin{lem} There exists  $g\in G$ and $\lambda\in \bC^\times$ such that 
\[
\mathrm{Gauge}_g(\nabla_{r,m}') =\nabla_{r,m}^\lambda 
\]
\end{lem}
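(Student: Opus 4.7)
The plan is to use the gauge action formula together with the identities already established in the paragraphs preceding the lemma, which do the actual work. Recall that the gauge action is
\[
\mathrm{Gauge}_g(\nabla) = d + \mathrm{Ad}_g([\nabla]) - (dg)g^{-1},
\]
so I would first note that since the element $g \in G$ guaranteed by \cite[Theorem 6.2]{Kostant} is a \emph{constant} element of $G$ (i.e., an element of the constant group, not of the loop group), we have $dg = 0$. Consequently the gauge action on $\nabla_{r,m}'$ simplifies to pointwise conjugation of the matrix part by $g$.

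Next, since $t$ is a scalar with respect to $\mathrm{Ad}_g$, one pulls the scalar $t^{-m}\frac{dt}{t}$ outside $\mathrm{Ad}_g$, obtaining
\[
\mathrm{Gauge}_g(\nabla_{r,m}') = d + t^{-m}\,\mathrm{Ad}_g(N_r' + t^{-1} E_r')\,\frac{dt}{t}.
\]
Now I would invoke the identity $\mathrm{Ad}_g(N_r' + t^{-1} E_r') = \lambda(N_r + t^{-1} E_r)$, already arranged in the paragraph above the lemma via Kostant's uniqueness-of-affine-pinning result. Substituting this gives precisely $\nabla_{r,m}^\lambda$, completing the argument.

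There is essentially no obstacle here: the real content has been dispatched by the Kostant rigidity of affine pinnings and the subsequent choice of compatible $N_r', E_r'$ in the preceding discussion. The only subtlety worth pointing out explicitly in the proof is the constancy of $g$, which kills the Maurer--Cartan term $(dg)g^{-1}$ and makes the gauge action coincide with ordinary adjoint action on the matrix coefficients. Thus the lemma is a direct corollary of the equations displayed immediately above its statement.
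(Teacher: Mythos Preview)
Your proposal is correct and is exactly the argument the paper intends: the paper simply writes ``The above discussion implies'' and states the lemma without further proof, and you have spelled out the implicit steps (constancy of $g$ killing the Maurer--Cartan term, then applying the displayed identity $\mathrm{Ad}_g(N_r'+t^{-1}E_r')=\lambda(N_r+t^{-1}E_r)$). There is nothing to add.
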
 

It remains to find when two connections $\nabla_{r,m}^\lambda$ and $\nabla_{r,m}^{\lambda'}$ are gauge equivalent. To this end, we determine the  relevant part of the moduli space of $\cC$-toral connections of slope $i/h=r/h+m$ (cf. \cite[Example 18]{KS},  where the case $i=1$ is discussed). 

\subsubsection{Determination of the moduli space} 
By Theorem \ref{formaltypes}, the moduli space of $\cC$-toral connections of slope $i/h$ is 
$\cA(\cC,i/h)/\Waff_\cC$. Here $\cA(\cC,i/h)$ is the open subset of $\fc(-i/h)\oplus\dots\oplus \fc(0)$ for which the component in degree $-i/h$ is regular semisimple.  This subset is nonempty if and only if $i$ is coprime to $h$, in which case its homogeneous degree $-i/h$ part is isomorphic to $\bC^\times$. Recall that $\Waff_\cC$ is the semidirect product of $W_\cC=N_{G(\cK)}(\cC)/\cC$ by $\cC/\cC_0$. Moreover, $\cC/\cC_0$ acts by translation on $\fc(0)$. But the latter space is zero, which implies the action is trivial. Thus, the homogeneous part of  the moduli space is well-defined and isomorphic to $\bC^\times/W_\cC$. 

Finally, $W_\cC$ is
  isomorphic to a subgroup of the centralizer of a Coxeter element in
  $W$~\cite[Proposition 5.9]{BSregular}, so $W_\cC$ is a cyclic
  group of order $h'$ dividing $h$. In fact, in the present case, we have  $h'=h$. To see this, let $\zeta$ be a primitive $h^\mathrm{th}$ root of unity. Kostant \cite{Kostant} proved that there is an element $s\in T$ for which
   \[
  \mathrm{Ad}(s)x=\zeta^k x,\quad \quad \forall \, x\in\frak \fg_\alpha, \quad  |\alpha|=k.
  \]
    In particular, the standard cyclic element $N_1+E_1$ is an eigenvector of $\Ad(s)$ with eigenvalue $\zeta^{-1}$.  It follows that $s\in N(\cC)$ and $\Ad(s)(N_1+t^{-1}E_1)=\zeta^{-1}(N_1+t^{-1}E_1)$, so the image of $s$ in $W_C$ has order $h$. We summarise the above discussion in the following: 
    
\begin{prop} If $r$ is a positive integer coprime to $h$, and $m$ is a nonnegative integer, then  the moduli space of formal Coxeter connections of slope $i/h=m+r/h$ is isomorphic to $\bC^\times/\mu_{h}$.
\end{prop}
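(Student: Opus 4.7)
The plan is to derive the proposition as a packaging of Theorem \ref{formaltypes} with the explicit description of the Coxeter Cartan grading and the Weyl group element produced in the discussion above. First I would invoke Theorem \ref{formaltypes}, which identifies the full moduli space of $\cC$-toral connections of slope $i/h$ with $\cA(\cC,i/h)/\Waff_\cC$. By construction, the matrix of $\nabla_{r,m}^\lambda$ lies purely in $\fc(-i/h)$, so the locus of formal Coxeter connections corresponds to the formal types supported only in degree $-i/h$. Kostant's theorem, transported via the isomorphism $\fg_i\simeq \fg(\cK)_{x_I}(i/h)$, gives that $\fc(-i/h)$ is one-dimensional when $\gcd(i,h)=1$, so its regular semisimple locus is $\bC^\times$.

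Next I would analyse the restriction of $\Waff_\cC=W_\cC\ltimes (\cC/\cC_0)$ to this homogeneous locus. The translation factor acts only on the degree-$0$ component, and this vanishes since $\fc(0)=0$, so translations are trivial. Because $W_\cC$ preserves the grading, it acts linearly on $\fc(-i/h)\cong \bC$, factoring through a finite cyclic subgroup $\mu_{h'}$ of $\bC^\times$. In particular the homogeneous stratum of $\cA(\cC,i/h)$ is $\Waff_\cC$-stable, so passing to the quotient respects this stratum and yields $\bC^\times/\mu_{h'}$ for the homogeneous part of the moduli space.

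The remaining step is to identify $h'=h$. For this I would take the Kostant element $s\in T$ satisfying $\Ad(s)|_{\fg_\alpha}=\zeta^{\mathrm{ht}(\alpha)}\cdot \mathrm{Id}$, where $\zeta$ is a primitive $h$-th root of unity. A direct computation yields $\Ad(s)N_r=\zeta^{-r}N_r$ and $\Ad(s)E_r=\zeta^{h-r}E_r=\zeta^{-r}E_r$, so $\Ad(s)(N_r+t^{-1}E_r)=\zeta^{-r}(N_r+t^{-1}E_r)$. Since $\gcd(r,h)=1$, the scalar $\zeta^{-r}$ is a primitive $h$-th root of unity, so the image of $s$ in $W_\cC$ already acts as a generator of $\mu_h$ on $\fc(-i/h)$. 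Combined with the bound $|W_\cC|\mid h$ from \cite[Proposition 5.9]{BSregular}, this forces $W_\cC\cong \mu_h$ acting faithfully on $\fc(-i/h)$, and the quotient $\bC^\times/\mu_h$ is the claimed moduli space.

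No genuine obstacle is expected, since every ingredient has been prepared above; the main piece of bookkeeping is verifying that the $\Waff_\cC$-action preserves the homogeneous stratum, so that restriction and quotient commute, and this follows automatically from the grading-preservation property of $W_\cC$ together with the vanishing of $\fc(0)$.
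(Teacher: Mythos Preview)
Your proposal is correct and follows essentially the same route as the paper: invoke Theorem~\ref{formaltypes}, note that $\fc(0)=0$ kills the translation action, identify the homogeneous locus with $\bC^\times$, bound $|W_\cC|$ by $h$ via \cite[Proposition~5.9]{BSregular}, and then use Kostant's element $s\in T$ to exhibit an element of order $h$. The only cosmetic difference is that the paper computes $\Ad(s)$ on $N_1+t^{-1}E_1$ to see $s$ has order $h$ in $W_\cC$, whereas you compute it directly on $N_r+t^{-1}E_r$ and use $\gcd(r,h)=1$ to conclude the eigenvalue is primitive; both yield the same conclusion.
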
   

\begin{cor} The formal connections $\nabla_{r,m}^\lambda$ and
  $\nabla_{r,m}^{\mu}$ are gauge equivalent if and only if
  $\lambda/\mu$ is an ${h}^{\mathrm{th}}$ root of unity.
\end{cor}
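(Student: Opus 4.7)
The corollary should follow almost immediately from the preceding proposition together with Theorem~\ref{formaltypes}. The plan is to observe that $\nabla_{r,m}^\lambda$ is manifestly a homogeneous $\cC$-toral connection whose formal type is $\lambda$ times a fixed generator of $\fc(-i/h)$, and then to read off gauge-equivalence from the moduli-space description.

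\smallskip

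More precisely, I would proceed as follows. First, recall from the discussion preceding the proposition that $N_r+t^{-1}E_r$ generates the one-dimensional space $\fg(\cK)_{x_I}(-r/h)\cap \fc$, so that $t^{-m}(N_r+t^{-1}E_r)$ generates $\fc(-r/h-m)=\fc(-i/h)$. Consequently, the connection matrix of $\nabla_{r,m}^\lambda$, namely $\lambda t^{-m}(N_r+t^{-1}E_r)\,\dtt$, lies in $\cA(\cC,i/h)\,\dtt$, and under the identification $\fc(-i/h)\cong \bC$ afforded by this choice of generator, $\nabla_{r,m}^\lambda$ corresponds to the scalar $\lambda$. In particular, $\nabla_{r,m}^\lambda$ and $\nabla_{r,m}^\mu$ represent the points $\lambda,\mu\in\bC^\times\subset \cA(\cC,i/h)$.

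\smallskip

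By Theorem~\ref{formaltypes}, two homogeneous $\cC$-toral connections of slope $i/h$ are gauge-equivalent in $G(\cK)$ if and only if their formal types lie in the same $\Waff_\cC$-orbit. The argument proving the proposition already shows that on the homogeneous piece $\fc(-i/h)\cong\bC$ the $\cC/\cC_0$-factor acts trivially (since $\fc(0)=0$) and that $W_\cC$ is a cyclic group of order exactly $h$, generated by the Kostant element $s\in T$ acting on $N_1+t^{-1}E_1$ by the scalar $\zeta^{-1}$. Since $\Ad(s)$ respects the grading and acts on $\fc(-i/h)$ through some $h^{\mathrm{th}}$ root of unity, and $W_\cC$ has order $h$, the induced action on $\bC$ is multiplication by $\mu_h$.

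\smallskip

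Combining these observations, $\nabla_{r,m}^\lambda$ and $\nabla_{r,m}^\mu$ are gauge-equivalent iff $\lambda$ and $\mu$ lie in the same $\mu_h$-orbit on $\bC^\times$, iff $\lambda/\mu\in\mu_h$. The only point that requires a moment's care -- the potential main obstacle -- is verifying that the generator of $W_\cC$ coming from Kostant's element $s$ really acts on the degree $-i/h$ piece $\fc(-i/h)$ by an $h^{\mathrm{th}}$ root of unity of exact order $h$ (and not merely an $h'^{\mathrm{th}}$ root for some proper divisor $h'\mid h$); but this is guaranteed by Kostant's formula $\Ad(s)x=\zeta^{k}x$ for $x\in\fg_\alpha$ of height $k$, applied to the height$-r$ and height$-(h-r)$ components of $N_r+t^{-1}E_r$, together with $\gcd(r,h)=1$.
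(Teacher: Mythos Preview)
Your approach is correct and essentially identical to the paper's: the corollary is presented there without separate proof, as an immediate consequence of the preceding proposition identifying the homogeneous moduli space with $\bC^\times/\mu_h$. Your final paragraph, checking via $\Ad(s)(N_r+t^{-1}E_r)=\zeta^{-r}(N_r+t^{-1}E_r)$ and $\gcd(r,h)=1$ that the generator of $W_\cC$ acts on $\fc(-i/h)$ by a \emph{primitive} $h$-th root of unity for every admissible $r$, makes explicit a detail the paper only spells out for $r=1$.
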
 

\subsection{Local differential Galois group} \label{s:localDifferential}
Let $\mathcal{I}_0$ denote the differential Galois group of $\Spec(\cK)$. Let $\cP_0$ be the subgroup of $\cI_0$ classifying irregular connections. In some references, $\cI_0$ is referred to as the inertia group and $\cP_0$ the wild inertia subgroup.  In view of the Tannakian definition of $\cI_0$, every formal connection $\nabla$ determines a homomorphism $\phi: \cI_0\ra G$. The group $I_0=I_0^\nabla:=\phi(\cI_0)$ is called the differential Galois group of $\nabla$. 

In this subsection, we follow \cite[\S 13]{FG} to determine  $I_0$ for the formal Coxeter connection $\nabla_{r,m}^\lambda$. Let $\phi: \cI_0\ra G$ be the homomorphism associated to $\nabla_{r,m}^\lambda$. Then the image $P_0:=\phi(\cP_0)$  is  the smallest torus $S$ in $G$ whose Lie algebra contains the regular semisimple element $N_r+E_r$. 
The full image $I_0= \phi(\cI_0)$ normalises $S$, and the quotient is generated by the element $n=(2\rho)(e^{\pi i/h})$. As $n$ normalises $S$, it also normalises $T'=\mathrm{C}_{G}(S)$ which is a maximal torus in $G$. The image of $n$ in $N(T')/T'$ is a Coxeter element.  

Next, note that the composition $I_0\rightarrow G\rightarrow \mathrm{Aut}(\fg)$ gives rise to an action of $I_0$ on $\fg$. 
For future reference, we record the following lemma: 
\begin{lem} \label{l:localDifferential}
For formal Coxeter connections $\nabla_{r,m}^\lambda$, we have $\fg^{I_0}=0$.
\end{lem}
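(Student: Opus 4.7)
The plan is to compute $\fg^{I_0}$ in two stages, first taking invariants under the connected component $P_0$ and then under the quotient generated by $n$.

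First I would compute $\fg^{P_0}$. By the discussion preceding the lemma, $P_0 = S$ is a torus whose Lie algebra $\fs$ contains the regular semisimple element $N_r + E_r$, and $T' = C_G(S)$ is a maximal torus of $G$ (so in particular $S \subseteq T'$ and $\fs \subseteq \ft'$). Since $\fg^{P_0} = C_\fg(S) = C_\fg(\fs)$, the inclusions
\[
\ft' = C_\fg(\ft') \subseteq C_\fg(\fs) \subseteq C_\fg(N_r+E_r) = \ft'
\]
(where the last equality uses that $N_r + E_r$ is regular semisimple with centralizer $T'$) show that $\fg^{P_0} = \ft'$.

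Next I would pass to the quotient. Since $I_0/P_0$ is generated by the image of $n = (2\rho)(e^{\pi i /h})$, we have
\[
\fg^{I_0} = (\fg^{P_0})^{n} = (\ft')^{n}.
\]
The action of $n$ on $\ft'$ factors through its image in $N_G(T')/T'$, which by the preceding paragraph is a Coxeter element $c$ of the Weyl group of $T'$. Thus it remains to show $(\ft')^c = 0$.

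The final step is the standard fact that a Coxeter element has no nonzero fixed vector on the Cartan: its eigenvalues on $\ft'$ are the primitive roots of unity $e^{2\pi i m_j /h}$, where $m_1,\dots,m_\ell$ are the exponents of $G$, and every exponent satisfies $1 \leq m_j \leq h-1$, so no eigenvalue equals $1$. Hence $(\ft')^c = 0$, and the lemma follows. There is no real obstacle here; the only point requiring a little care is checking that the ``Coxeter element'' in the quotient genuinely has the standard spectral behaviour on $\ft'$, which is immediate from its characterisation in $N(T')/T'$.
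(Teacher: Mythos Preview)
Your proof is correct and follows essentially the same approach as the paper: the paper simply observes $\fg^{P_0}\subset\fg^{N_r+E_r}=\ft'$ and concludes $\fg^{I_0}\subset(\ft')^n=0$, while you establish the (slightly stronger but unnecessary) equality $\fg^{P_0}=\ft'$ and spell out why a Coxeter element has no nonzero fixed vector on $\ft'$.
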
  
\begin{proof} Indeed, $\fg^{P_0} \subset \fg^{N_r+E_r} = \ft':=\mathrm{Lie}(T')$ implying that $\fg^{I_0} \subset (\ft')^n = 0$. 
\end{proof}

 \section{Rigidity}\label{s:Rigidity}
   Let $X$ be a smooth projective curve over $\bC$. Let $j: U\ra X$ denote the inclusion of a nonempty open subset. 
   \begin{defe} A connection  $\nabla$  on $U$ is said to be (cohomologically) rigid  if 
\[
\mathrm{H}^1 (\mathbb{P}^1, j_{!*}\ad_{\nabla})=0.
\] 
\end{defe} 
As explained in 
\cite[\S 3.2]{YunCDM}, this notion of rigidity is closely related to having no infinitesimal deformations. In more detail, if $\nabla$ satisfies 
\[
\mathrm{H}^i (\mathbb{P}^1, j_{!*}\ad_{\nabla})=0, \qquad i\in \{0,1,2\}
\]
then $\nabla$ has no infinitesimal deformations. In practice,  $\mathrm{H}^0$ and $\mathrm{H}^2$ vanish for many connections of interest, e.g., for irreducible ones. 

\subsection{Main theorem} 
In the previous section, we defined the formal Coxeter connections $\nabla_{r,m}^\lambda$ as connections on the punctured disk.  Now observe that the element of the loop algebra representing this formal connection, i.e. $t^{-m}(N_r+t^{-1}E_r)$, lies in $\fg(\bC[t,t^{-1}])$. Thus, we can think of $\nabla_{r,m}^\lambda$ as a connection on the trivial bundle on $\bGm$. We refer to these global connections simply as \emph{Coxeter connections} and, by an abuse of notation, also denote them by $\nabla_{r,m}^\lambda$. The aim of this section is to determine when $\nabla_{r,m}^\lambda$ is rigid.  
 \begin{thm}\label{t:main} Let $\lambda\in \bC^\times$. A Coxeter connection $\nabla=\nabla_{r,m}^\lambda$ is rigid if and only if we are in one of the following cases: 
 \begin{enumerate} 
  \item[(i)] $m=1$ and $r=1$ in which case $\nabla$ is regular at infinity;  
 \item[(ii)] $m=0$ and $r=1$  in which case $\nabla$ is regular singular at infinity with principal unipotent monodromy\footnote{This is essentially the Frenkel-Gross connection.}; 
 \item[(iii)] $m=0$ and $r$ satisfies the following conditions 
 \[
\begin{tabular}{|c|c|}
\hline
\textrm{Root system} & \textrm{Conditions on $r$} \\
\hline
$A_{n-1}$ & $r|n\pm 1$ \\
\hline
$B_n$ & $r|n+1, \,\, r|2n+1$\\
\hline
$C_n$ & $r|2n\pm 1$\\
\hline
$D_n$ & $r|2n, \, \, r|2n-1$\\
\hline
$E_7$ & $r=7$\\
\hline
\end{tabular}
\]
in which case $\nabla$ is regular singular at infinity with unipotent monodromy $\exp(N_r)$. 
 \end{enumerate} 
 \end{thm}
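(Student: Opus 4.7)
\textbf{The plan} is to invoke the Euler--Poincar\'e formula for $j_{!*}\ad_\nabla$: on $\bP^1$ with singular set $S$, one has
\[
\chi(\bP^1,j_{!*}\ad_\nabla) \;=\; 2\dim\fg \;-\; \sum_{x\in S}\bigl(\dim\fg-\dim\fg^{I_x}\bigr) \;-\; \sum_{x\in S}\mathrm{Irr}_x(\ad_\nabla),
\]
and, once $H^0$ and $H^2$ vanish, rigidity is equivalent to this Euler characteristic being zero. I would first observe that $H^0=H^2=0$ is automatic here: a global horizontal section of $\ad_\nabla$ restricts to an $I_0$-invariant element of $\fg$, which Lemma~\ref{l:localDifferential} forces to be zero, and $H^2$ vanishes by Verdier duality.

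\textbf{Local data.} At $0$, Lemma~\ref{l:localDifferential} gives $\dim\fg^{I_0}=0$ and Proposition~\ref{p:irregularity} gives $\mathrm{Irr}_0(\ad_\nabla)=(r+mh)\ell$. At $\infty$, the change of variable $u=1/t$ turns the connection into $d-(u^{m-1}N_r+u^m E_r)\,du$; this is regular for $m\ge 1$ (so $\infty$ contributes nothing), while for $m=0$ it is regular singular with residue $-N_r$, so the monodromy lies in the conjugacy class of the unipotent element $\exp(N_r)$, contributing drop $\dim\fg-\dim\fg^{N_r}$ and no irregularity. Using the classical identity $\dim\fg=(h+1)\ell$, the rigidity equation splits as follows. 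For $m\ge 1$ it reads $(h+1)\ell=(r+mh)\ell$, which with $r\in[1,h)$ coprime to $h$ and $m\ge 1$ forces $r=m=1$, giving case~(i). For $m=0$ it reduces to
\[
\dim\fg^{N_r}\;=\;r\ell,
\]
which holds trivially for $r=1$ (the principal nilpotent has centralizer of dimension $\ell$), giving case~(ii).

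\textbf{The hard part} is determining, for each simple type, exactly when $\dim\fg^{N_r}=r\ell$ holds with $r\in(1,h)$ coprime to $h$. For $A_{n-1}$, the matrix form of $N_r$ from Example~\ref{e:example} shows that, writing $n=qr+s$ with $1\le s\le r-1$, the Jordan partition of $N_r$ consists of $s$ parts of size $q+1$ and $r-s$ parts of size $q$; applying $\dim Z_{\fg}(N)=\sum_j\mu_j^2-1$ (with $\mu$ the dual partition) reduces the condition to $(s-1)(s-(r-1))=0$, equivalent to $r\mid n\pm 1$. For the remaining classical types one identifies the partition of $N_r$ on the defining representation (respecting the invariant bilinear form) and applies the corresponding centralizer-dimension formulas for $\So$ or $\Sp$, from which the tabulated divisibility conditions follow by an analogous elementary manipulation. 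For the exceptional types the obstacle is more serious: for each admissible $r$ one must identify $N_r$ as a specific nilpotent orbit, most practically by computing its weighted Dynkin diagram from the height grading and then consulting standard tables; this case-by-case check eliminates every exceptional example except the single one at $(G,r)=(E_7,7)$, where $N_7$ falls into the orbit $A_2+3A_1$ and the centralizer dimension indeed equals $49=7\cdot 7=r\ell$.
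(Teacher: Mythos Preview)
Your proposal is correct and follows essentially the same approach as the paper: both reduce rigidity to the numerical criterion (the paper quotes it from \cite{FG}, you derive it via the Euler--Poincar\'e formula with $H^0=H^2=0$), both use Lemma~\ref{l:localDifferential} and Proposition~\ref{p:irregularity} for the local data at $0$, both compute the behaviour at $\infty$ to separate the cases $m\ge 1$ and $m=0$, and both reduce the $m=0$ case to the identity $\dim C_\fg(N_r)=r\ell$ followed by the same partition combinatorics for classical types and table look-up for exceptional types. The only differences are cosmetic: you handle all $m\ge 1$ in a single step (the paper first bounds $m\le 1$ and then treats $m=1$), and your treatment of types $B_n$, $C_n$, $D_n$ and the exceptional cases is a sketch where the paper carries out the elementary manipulations and orbit identifications in full.
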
 

For Coxeter connections, it is easy to see that 
\[
\mathrm{H}^0 (\mathbb{P}^1, j_{!*}\ad_{\nabla})=\mathrm{H}^2 (\mathbb{P}^1, j_{!*}\ad_{\nabla})=0.
\]
Thus, being rigid in this case is equivalent to having no infinitesimal deformations. 
 
\subsection{A numerical criterion for rigidity}\label{ss:numerical} Let $\cI=\pi_1^{\mathrm{diff}}(\bGm)$ be the differential Galois group of $\bGm$.  
Then a connection $\nabla$ on $\bGm$ defines a homomorphism $\cI\ra G$ whose image is called the (global) differential Galois group of $\nabla$ and denoted by $I$. It contains the local differential Galois groups $I_0$ and $I_\infty$. 

Now suppose  $\nabla$ is irregular at $0$ and regular singular at $\infty$.  Define 
\begin{equation} \label{eq:numerical} n(\nabla):= \Irr_0(\ad_{\nabla})
  - \dim(\fg^{I_0}) - \dim(\fg^{I_\infty}) + 2\dim(\fg^I);
\end{equation} 
here, $\Irr_0$ denotes the irregularity at $0$.  According to \cite[prop. 11]{FG}, $\dim(\mathrm{H}^1 (\mathbb{P}^1, j_{!*}\ad_{\nabla}))=n(\nabla)$. Thus, 
we find that 
\textrm{$\nabla$ is rigid} if and only if $n(\nabla)=0$.
 We call this the \emph{numerical criterion for rigidity}. 
 
 We now use this criterion to establish when the Coxeter connections $\nabla_{r,m}^\lambda$ are rigid. Since $\lambda$ plays no role in the analysis, we set $\lambda=1$ and omit it from the notation.

\begin{prop} If $\nabla_{r,m}$ is rigid, then $m=0$ or $m=1$. 
\end{prop}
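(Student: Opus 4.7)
The strategy is to apply the numerical criterion of \S\ref{ss:numerical}: rigidity of $\nabla_{r,m}$ is equivalent to $n(\nabla_{r,m})=0$, where $n(\nabla_{r,m})$ is defined in \eqref{eq:numerical}. The plan is to bound $n(\nabla_{r,m})$ from below using results already at hand and to verify that the bound is strictly positive whenever $m\geq 2$, which then forces $m\in\{0,1\}$ under rigidity.

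Three of the four terms on the right of \eqref{eq:numerical} are directly supplied by the earlier material. Proposition~\ref{p:irregularity} gives $\Irr_0(\ad_{\nabla_{r,m}})=(r+mh)\ell$, where $\ell=\rk(G)$, and Lemma~\ref{l:localDifferential} gives $\dim\fg^{I_0}=0$. For the remaining two terms I only need the trivial estimates $2\dim\fg^I\geq 0$ and $\dim\fg^{I_\infty}\leq\dim\fg=\ell(h+1)$, the latter equality using $|\Phi|=\ell h$ for a simple Lie algebra. Substituting these into \eqref{eq:numerical} yields
\[
n(\nabla_{r,m})\;\geq\;(r+mh)\ell-0-\ell(h+1)+0\;=\;\ell\bigl((m-1)h+r-1\bigr).
\]
For $m\geq 2$ and $r\geq 1$ the right-hand side is at least $\ell h>0$, contradicting $n(\nabla_{r,m})=0$. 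Hence rigidity forces $m\leq 1$.

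There is no genuine obstacle in this argument; it is a one-line arithmetic consequence of the irregularity formula and the vanishing of $\fg^{I_0}$. It is worth noting for orientation that the change of variable $u=t^{-1}$ shows the matrix of $\nabla_{r,m}$ to be holomorphic at $u=0$ whenever $m\geq 1$, so $I_\infty$ is in fact trivial in that range and the crude upper bound $\dim\fg^{I_\infty}\leq\ell(h+1)$ is sharp; however, only the inequality is needed for the proposition.
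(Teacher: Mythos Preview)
Your argument is correct and follows essentially the same route as the paper: compute the irregularity from Proposition~\ref{p:irregularity}, use Lemma~\ref{l:localDifferential} for $\fg^{I_0}=0$, bound $\dim\fg^{I_\infty}$ above by $\dim\fg=(h+1)\ell$, and observe that the resulting lower bound $\ell\bigl((m-1)h+r-1\bigr)$ is positive for $m\ge 2$. The only cosmetic difference is that the paper records $\fg^{I}=0$ (immediate from $I\supseteq I_0$ and $\fg^{I_0}=0$) rather than the cruder $2\dim\fg^{I}\ge 0$; since both lead to the identical lower bound, nothing is lost.
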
 
\begin{proof} 
By Lemma \ref{l:localDifferential}, we have  $\fg^{I_0}=0$. As $I$ contains $I_0$, this implies that $\fg^I$ also vanishes.  Next, by Proposition \ref{p:irregularity},  $ \Irr(\ad_{\nabla_r})= (r+mh)\ell$. Thus, we have 
\[
n(\nabla) = (r+mh)\ell - \dim(\fg^{I_\infty}) \geq (r+mh)\ell - (h+1)\ell= (r+mh-h-1)\ell.
\]
Note that the first inequality is using the fact that $\dim(\fg^{I_\infty})\leq \dim(\fg)=(h+1)\ell$. 
As $m, r\in \bZ_{\geq 0}$ and $h\geq 2$, it is clear that if $n(\nabla)=0$ then either $m=0$ or $m=1$.
\end{proof} 

\begin{prop} The connection $\nabla_{r,1}$ is rigid if and only if $r=1$. 
\end{prop}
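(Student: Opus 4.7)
\medskip

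The plan is to apply the numerical criterion $n(\nabla_{r,1}) = 0$ of \S\ref{ss:numerical} and to exploit the fact, already established in the proof of the previous proposition, that $\fg^{I_0} = 0$ (hence $\fg^I = 0$, since $I_0 \subseteq I$). Combined with Proposition \ref{p:irregularity}, which gives $\Irr_0(\ad_\nabla) = (r+h)\ell$ for $m=1$, the numerical criterion reduces to showing
\[
n(\nabla_{r,1}) = (r+h)\ell - \dim(\fg^{I_\infty}) = 0.
\]
The whole problem thus becomes a computation of $\dim(\fg^{I_\infty})$.

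For the case $r=1$, I would pass to the coordinate $u := 1/t$ at infinity. Since $dt/t^2 = -du$ and $t^{-1} = u$, the connection matrix at infinity becomes
\[
\nabla_{1,1} = d - (N_1 + u E_1)\, du,
\]
which is \emph{regular} (in fact nonsingular) at $u=0$. Hence $I_\infty$ is trivial, so $\dim(\fg^{I_\infty}) = \dim(\fg) = (h+1)\ell$, and the numerical criterion gives $n(\nabla_{1,1}) = (h+1)\ell - (h+1)\ell = 0$, establishing rigidity.

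For the case $r > 1$, the key observation is the trivial bound $\dim(\fg^{I_\infty}) \leq \dim(\fg) = (h+1)\ell$. Since $r \geq 2$, we have
\[
n(\nabla_{r,1}) = (r+h)\ell - \dim(\fg^{I_\infty}) \geq (r+h)\ell - (h+1)\ell = (r-1)\ell > 0,
\]
so the connection cannot be rigid.

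The only potentially delicate step is the change-of-variables computation identifying $\nabla_{1,1}$ as regular at infinity; everything else is a direct substitution into the numerical formula \eqref{eq:numerical} using inputs already assembled (Proposition~\ref{p:irregularity} and Lemma~\ref{l:localDifferential}). No case-by-case information about $G$ is needed here, in contrast with the $m=0$ case treated in part (iii) of Theorem~\ref{t:main}.
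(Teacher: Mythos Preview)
Your proof is correct and follows the same approach as the paper: apply the numerical criterion using $\fg^{I_0}=\fg^I=0$, the irregularity $(r+h)\ell$, and the observation that $\nabla_{1,1}$ is regular at $u=\infty$ (so $\fg^{I_\infty}=\fg$), yielding $n(\nabla_{r,1})=(r-1)\ell$. In fact your change of variables $\nabla_{r,1}=d-(N_r+uE_r)\,du$ shows regularity at infinity for \emph{all} $r$, so you could replace the trivial bound in the $r>1$ case by the exact equality $\dim(\fg^{I_\infty})=(h+1)\ell$ and obtain $n(\nabla_{r,1})=(r-1)\ell$ directly; but your argument as written is already complete.
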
 
\begin{proof} It is easy to see that the connection is regular at $\infty$, so ${I_\infty}=0$ and $\fg^{I_\infty}=\fg$, and therefore $n(\nabla)=0$. Thus, by the numerical criterion for rigidity, $\nabla$ is rigid. 
\end{proof}

 Next, we treat the case $m=0$ of  the theorem and write $\nabla_r$ for $\nabla_{r,0}$. It is easy to see that the connection $\nabla_{r}$ is regular singular at infinity with monodromy $\exp(N_r)$. Let $\cO_r$ denote the nilpotent orbit containing $N_r$ and $C_\fg(N_r)$ the centraliser of $N_r$ in $\fg$. 
\begin{prop}\label{p:dim}
 The connection $\nabla_{r}$ is rigid if and only if  
\begin{equation}\label{eq:key} \dim(\cO_{N_r})= (h+1-r)\ell;
\end{equation}
or equivalently, 
\begin{equation} \label{eq:key2}
\dim C_\fg(N_r) = r\ell. 
\end{equation} 
\end{prop}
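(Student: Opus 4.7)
The plan is to apply the numerical criterion from \S\ref{ss:numerical} and compute each of the four terms of $n(\nabla_r)$ using the results already established.

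First, since $\nabla_r$ is irregular only at $0$ and has slope $r/h$ there, Proposition~\ref{p:irregularity} (with $m=0$) gives $\Irr_0(\ad_{\nabla_r}) = r\ell$. Next, Lemma~\ref{l:localDifferential} gives $\fg^{I_0}=0$; since $I\supseteq I_0$, the global invariants $\fg^{I}$ also vanish. So two of the four terms in \eqref{eq:numerical} are zero, and we are reduced to
\[
n(\nabla_r) = r\ell - \dim(\fg^{I_\infty}).
\]

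It remains to identify $\fg^{I_\infty}$ with $C_\fg(N_r)$. The key point is that $\nabla_r$ is regular singular at $\infty$ with monodromy $\exp(N_r)$ (a unipotent element), so $I_\infty$ is topologically generated by this element, acting on $\fg$ through $\exp(\ad N_r)$. Since $\ad N_r$ is nilpotent, the operators $\exp(\ad N_r) - \Id$ and $\ad N_r$ have the same kernel, whence
\[
\fg^{I_\infty} = \ker\bigl(\exp(\ad N_r)-\Id\bigr) = \ker(\ad N_r) = C_\fg(N_r).
\]
Combining these, $n(\nabla_r) = r\ell - \dim C_\fg(N_r)$, and rigidity via the numerical criterion is precisely the condition $\dim C_\fg(N_r) = r\ell$, which is \eqref{eq:key2}.

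Finally, the equivalence with \eqref{eq:key} follows from the orbit-stabiliser identity $\dim \cO_{N_r} = \dim\fg - \dim C_\fg(N_r)$ together with the ``strange'' dimension formula $\dim\fg = (h+1)\ell$ valid for any simple Lie algebra (since $|\Phi|=h\ell$). Then $\dim C_\fg(N_r)=r\ell$ is equivalent to $\dim\cO_{N_r}=(h+1-r)\ell$. The only genuinely nontrivial step here is the identification of $I_\infty$-invariants with the centraliser of $N_r$; everything else is a direct bookkeeping of already-established facts.
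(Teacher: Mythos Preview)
Your proof is correct and follows essentially the same approach as the paper: compute $n(\nabla_r)=r\ell-\dim(\fg^{I_\infty})$ using Proposition~\ref{p:irregularity} and Lemma~\ref{l:localDifferential}, identify $\fg^{I_\infty}$ with $C_\fg(N_r)$ via the unipotent monodromy at $\infty$, and translate between \eqref{eq:key} and \eqref{eq:key2} using $\dim\fg=(h+1)\ell$. The only cosmetic difference is that the paper derives \eqref{eq:key} first and then \eqref{eq:key2}, while you do the reverse; your justification of $\fg^{I_\infty}=C_\fg(N_r)$ via $\ker(\exp(\ad N_r)-\Id)=\ker(\ad N_r)$ is in fact slightly more explicit than the paper's.
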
 
\begin{proof} We have 
\[
\dim(\fg^{I_\infty})=\dim(C_G(\on{Exp}(N_r)))= \dim(C_\fg(N_r))=\dim(\fg)-\dim(\cO_{N_r}). 
\]
Thus, 
\[
n(\nabla) = r\ell - \dim(\fg^{I_\infty})= (\dim(\fg) - (h+1-r)\ell ) -(\dim(\fg)-\dim(\cO_{N_r}))=\dim(\cO_{N_r}) - (h+1-r)\ell. 
\]
By the numerical criterion, $\nabla$ is rigid if and only if $\dim(\cO_{N_r})=(h+1-r)\ell$. The second equality follows from this and the fact that $\dim(\fg)=(h+1)\ell$. 
\end{proof} 

Note that if $r=1$, then $N_r$ is the principal nilpotent element. Therefore, $\dim(\cO_{N_r})=h\ell$ and \eqref{eq:key} holds. Thus, as proved in \cite{FG}, the connections $\nabla_{1}$ are always rigid. On the other hand, when $r>1$, $\dim(\cO_{N_r})$ depends on the type of the Lie algebra $\fg$; thus, a type by type analysis is required to determine the rigidity of $\nabla_r$. As we have already discussed the case $r=1$, we will not consider it as one of the possibilities.

\subsection{Proof of Theorem \ref{t:main} for classical groups} 
A convenient reference for nilpotent orbits in semisimple Lie algebras is \cite{CM}. In particular, in \S 5 of \emph{op. cit}, it is shown that Jordan canonical form leads to a bijection between nilpotent orbits in classical groups and certain partitions. This in turn leads to formulae for the dimension of nilpotent orbits in terms of the dual partition (cf. appendix of \cite{MY}). Below we will use these formulae to determine rigidity of $\nabla_r$ for classical groups. .

\subsubsection{Type $A_{n-1}$} Let us write $n=kr+n'$ where $0\leq n'<r$.  Since $N_1$ is principal nilpotent and one can take $N_r=N_1^r$, it is easy to see that the Jordan form of $N_r$ has $n'$ parts of size $k+1$ and $r-n'$ parts of size $k$. Thus, the dual partition has $k$ parts of size $r$ and $1$ part of size $n'$ and 
\[
\dim(\cO_{N_r}) = n^2 - kr^2 - (n')^2. 
\]
The equality \eqref{eq:key2} therefore takes the form 
\[
kr^2 + (n')^2 -1= r(n-1).
\]
Using the equality $n=kr+n'$, we find that the above equality holds if and only if
\[
(n'-1)(r-(n'+1))=0.
\]
Thus, either $n'=1$ or $n'=r-1$. This is in turn equivalent to $r| (n-1)$ or $r|(n+1)$.

\subsubsection{Type $B_n$} Nilpotent orbits in  $\mathfrak{so}_{2n+1}$ are in bijection with those partitions of $2n+1$ where even parts appear with even multiplicity.  
Let us write $2n+1=kr+n'$ where $0\leq n'<r$. In this case, $N_1$ is also principal nilpotent viewed as an element of $\Sl_{2n+1}$, and $N_r$ can again be taken to be $N_1^r$. Accordingly, the partition associated to the nilpotent element $N_r$ has $n'$ parts of size $k+1$ and $r-n'$ parts of size $k$. To see that this partition satisfies the required constraint, note that $r$ is odd, since it is coprime to the Coxeter number $2n$. Now if $n'$ is even, then we must have that $kr$ is odd, thus $k$ is odd, which means that $k+1$ is even. On the other hand, if $n'$ is odd, then $r-n'$ and $k$ are even.

The dual partition has $k$ parts of size $r$ and $1$ part of size $n'$. Therefore, we have 
\[
\dim C_\fg({N_r})=\frac{1}{2}\left( kr^2 + (n')^2 -
\begin{cases} 
n' & \textrm{if $k$ is even}\\
r-n' & \textrm{if $k$ is odd}
\end{cases} 
\right)
\]
Equating this with $r(2n)=\frac{1}{2} r(kr+n'-1)$, equality \eqref{eq:key2} takes the form 
\[
(n')^2 -
\begin{cases} 
n' & \textrm{if $k$ is even}\\
r-n' & \textrm{if $k$ is odd}
\end{cases} 
=r(n'-1). 
\]

If $k$ is even, then we obtain $n'=1$ or $r=n'$. Both cases are
impossible. Indeed, $n'=1$ implies that $r|2n$ and that contradicts
$\gcd(r,2n)=1$. The case  $r=n'$ is not allowed because $r$ is assumed to be less than $n'$. On the other hand, if $k$ is odd, we obtain that either $n'=0$ or $r=n'+1$. In the first case, we obtain $r|(2n+1)$  and in the second case $r|(n+1)$. 

Finally we check that if $r|(2n+1)$ or $r|(n+1)$ then $k$ cannot be even. If $r|2n+1$, then $n'=0$, so $2n+1=kr$, thus $k$ is odd. If $rs=n+1$, and $k=2t$, then $2n+1=2rs-1=2tr+n'$ implying that $n'=2r(s-t)-1$.  But now $n'\ge 0$ implies $s>t$.  Then $n'\ge 2r-1$ which is impossible since $n'<r$.

\subsubsection{Type $C_n$} Nilpotent orbits in $\mathfrak{sp}_{2n}$ are in bijection with partitions of $2n$ in which odd parts appear with even multiplicity. Let us write $2n=kr+n'$ with $0< n'<r$. Recall that $\gcd(r,2n)=1$; in particular, $r$ is odd.  As in types $A$ and $B$, the $N_r$'s are powers of a single Jordan block.  Thus, the nilpotent element  $N_r$ corresponds to the partition of $2n$ consisting of $n'$ parts of size $k+1$ together with $r-n'$ parts of size $k$. The fact that this partition satisfies the desired constraint follows by similar considerations to those in type $B_n$. 

The dual partition has $k$ parts of size $r$ and $1$ part of size $n'$. 
Now, we have 
\[
\dim C_\fg({N_r})=
\frac{1}{2}\left(kr^2 + n'^2 + 
\begin{cases} 
n' & \textrm{if $k$ is even}\\
r-n' & \textrm{if $k$ is odd}
\end{cases} 
\right).
\]
Equating this with $rn=r(kr+n')/2$, we find that the equality \eqref{eq:key2} takes the form 
\[
(n')^2 + 
\begin{cases} 
n' & \textrm{if $k$ is even}\\
r-n' & \textrm{if $k$ is odd}
\end{cases} 
=rn'. 
\]
If $k$ is even, we have 
\[
(n')^2+n' = rn' \implies r=n'+1 \implies 2n=kr+n'=kr+r-1 \implies 2n+1=r(k+1) \implies r|(2n+1).
\]
This is indeed consistent with our assumption because $2n+1=r(k+1)$ implies $k$ is even. 

On the other hand, if $k$ is odd, we obtain 
\[
(n')^2+r-n'=rn' \implies (n')^2-n'=(n'-1)r \implies \textrm{$r=n'$ or $n'=1$}.
\]
Note, however, that the case $r=n'$ is impossible. Thus, we obtain, 
\[
2n=kr+n'=kr+1 \implies 2n-1=kr \implies r|(2n-1).
\]
This is also consistent with our assumption because $2n-1=kr$ implies that $k$ has to be odd. 

Finally, note that 
if $r|2n-1$, then $2n=rs+1$, so $s=k$.  Now $r$ is odd and therefore so is  $k$. On the other hand, 
if $r|2n+1$, then $2n=rs-1$, so $k=s-1$, i.e. $2n=r(s-1)+(r-1)$.
Since $r-1$ is even and $r$ is odd, $s-1$ must be even, i.e. $k$ is even.

\subsubsection{Type $D_n$} Nilpotent orbits in  $\mathfrak{so}_{2n}$ are in bijection with partitions of $2n$ in which even parts appear with even multiplicity, except that very even partitions, i.e., those with only even parts each having even multiplicity, correspond to two nilpotent orbits.\footnote{These two orbits are identified if one uses $\Or_{2n}$ instead of $\SO_{2n}$.}  The Coxeter number is $2n-2$ and by assumption $\gcd(r,2n-2)=1$. Let us write $2n=kr+n'+1$ with $0\leq n'<r$. Here, $N_1$ is no longer a single Jordan block.  Instead, it is subregular as an element of $\Sl_{2n}$, i.e., it corresponds to the partition $(2n-1,1)$.  It is easy to check that one can take $N_r=N_1^r$.  It follows that the partition associated to $N_r$ has $n'$ parts of size $k+1$, $r-n'$ parts of size $k$, and $1$ part of size $1$. The dual 
partition has $1$ part of size $r+1$, $1$ part of size $n'$, and $k-1$ parts of size $r$. 
Therefore, we have
\[
\dim C_\fg({N_r})=\frac{1}{2}\left((r+1)^2+(k-1)r^2 +(n')^2 - 
\begin{cases} 
n'+1 & \textrm{if $k$ is even}\\
r-n'+1 & \textrm{if $k$ is odd}
\end{cases} 
\right).
\]
Equating this with $rn=\frac{1}{2}r(n'+kr+1)$, we find that the equation \eqref{eq:key2} takes the form 
\[
(n')^2 - 
\begin{cases} 
n' & \textrm{if $k$ is even}\\
r-n' & \textrm{if $k$ is odd}
\end{cases} 
=r(n'-1). 
\]
If $k$ is even, we obtain that either $n'=1$ or $n'=r$. Neither of these is a possibility because the first case contradicts $\gcd(2n-2,r)=1$ and the second contradicts $r<n'$. On other hand, if $k$ is odd, we obtain that $n'=0$ or $r=n'+1$, which implies $r|(2n-1)$ or $r|(2n)$. 

Finally note that in the first case, $n'=0$ and $k|(2n-1)$,  so $k$ is odd. In the second case, $n'=r-1$, thus $2n=r(k+1)$, implying that $k$ is odd.

\subsection{Proof of Theorem \ref{t:main} for exceptional groups} 
We now investigate the rigidity of $\nabla_r$ for exceptional groups. 
\subsubsection{Type $G_2$}  
We have  $h=6$; thus, the conditions $\gcd(r,h)=1$ and $1<r<6$ imply that $r=5$ is the only case to consider. By Proposition \ref{p:dim}, for $\nabla_r$ to be rigid, we need $\dim(\cO_{N_r})=4$. However, by inspecting the table  for  nilpotent orbits in type $G_2$  \cite[\S 8]{CM}, one finds that $\fg$ has no nilpotent orbit of dimension $4$. Thus,  $\nabla_5$ is not rigid. 

\subsubsection{Type $F_4$} In this case, $h=12$. As $\gcd(r,12)=1$, we see that possibilities for $r$ are $5$, $7$, and $11$. For $\nabla_r$ to be rigid, the dimension of the orbit containing $N_r$ must be $32$, $24$, and $8$, respectively. But $\fg$ has no nilpotent orbits of these sizes \cite{CM}, so the $\nabla_r$'s are not rigid. 

\subsubsection{Type $E_6$} In this case, $h=12$; thus $r\in \{5, 7, 11\}$. For $\nabla_r$ to be rigid, the dimension of the orbit containing $N_r$ must be $48$, $36$, and $12$ respectively. There are no nilpotent orbits of size $12$ or $36$. There is, however, a nilpotent orbit of size $48$. Table 4 in \cite{deGraaf} (last line of page 8), however, shows that the nilpotent orbit containing $N_5$ has label $2A_1+A_2$. According to 8.4 of \cite{CM}, this nilpotent orbit has size $50$. Thus, $\nabla_5$ is not rigid.

\subsubsection{Type $E_7$}
In this case $h=18$; thus, $r\in \{5, 7, 11, 13, 17\}$. For $\nabla_r$ to be rigid, the dimension of the orbit containing $N_r$ must be $98, 84, 56, 42$, and $14$ respectively. Note that in this case, we do not have a nilpotent orbit of size $14$, $42$, or $56$. Moreover, as W. de Graaf explained to us, using a GAP computation, one can show that $\dim(\cO_{N_5})=100$. In fact, this nilpotent orbit has Dynkin representative 
\[
[0,0,0,0,2,0,0]
\]
and is denoted by $A_3+A_2+A_1$ (see page 10 of \cite{deGraaf}). Thus $\nabla_5$ is not rigid. On the other hand, the table in \cite{deGraaf} shows that in fact $\dim(\cO_{N_7})=84$, as required. This is the orbit $A_2+3A_1$ with Dynkin representative 
\[
[2, 0,0,0,0,0,0].
\]
  Thus, $\nabla_7$ is rigid.

\subsubsection{Type $E_8$} 
In this case, $h=30$; thus, $r\in \{7, 11, 13, 17, 19, 23, 29\}$. For $\nabla_r$ to be rigid, the dimension of the orbit containing $N_r$ must be $192$, $160$, $144$, $112$, $96$, $64$, $16$,  respectively. Amongst these, only $192$ and $112$ are dimensions of some nilpotent orbits. However, one can check \cite{deGraaf} that the dimension of $\dim(\cO_{N_7})= 196$ and $\dim(\cO_{N_{17}})= 128$. Thus, the $\nabla_r$'s are not rigid.

\subsection{Global differential Galois group} In \S \ref{s:localDifferential}, we gave a description of the local differential Galois group $I_0$ of $\nabla_{r,m}$. In this subsection, we discuss the global differential Galois group $I$.
It is easy to see that the proof of \cite[Proposition 8]{FG} generalises to our setting to show that $I$ is reductive. 
Moreover,  $I$ contains $I_0$ and $I_\infty$ as subgroups.  Recall
that $I_\infty$ is generated by the monodromy $\exp(N_r)$ if $m=0$ and
is trivial otherwise.  In the case of $\nabla_1$, the monodromy is the principal unipotent class. This puts
a severe restriction on the reductive group $I$ and leads to the
classification of $I$ in each type \cite[\S 13]{FG}. However, when
$r>1$, the unipotent monodromy $\exp(N_r)$ of $\nabla_r$ is not the principal class;
thus, we do not have such a strong restriction on the type of the
reductive group $I$. For instance, as W. de Graaf explained to us,
there are are at least 46 semisimple subalgebras of $E_7$ of different
types containing the nilpotent element $N_7$.

Nevertheless, we expect that the global differential Galois groups of the
$\nabla_{r,m}$'s are ``large'', i.e., almost equal to $G$. Our
expectation is motivated by the following computation of the
differential Galois groups of the $\GL_n$-analogue of these
connections. In type $A_{n-1}$, consider the connection
$\mathrm{std}_{\nabla_{r,m}}$ associated to the standard representation.
Note that this is just the trivial vector bundle of rank $n$ on $\bGm$
equipped with the connection $\nabla_{r,m}$.

\begin{prop} The differential Galois group of
  $\mathrm{std}_{\nabla_{r,m}}$ is equal to $\SL_n(\bC)$ if $n$ is odd and
  to $\SP_{n}(\bC)$ if $n$ is even. 
\end{prop}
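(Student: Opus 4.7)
The plan is to identify $I$ by combining four ingredients: the tracelessness of the connection matrix, irreducibility of $V=\bC^n$ as an $I$-module, self-duality analysis, and reductivity together with a classification argument. Since $N_r$ and $E_r$ are nilpotent, the connection matrix is traceless, so $I \subseteq \SL_n$ immediately. To see that $V$ is irreducible as an $I$-module, I would pull back the formal connection at $0$ to the $n$-fold cyclic cover $s^n = t$; there it decomposes as a direct sum of $n$ rank-one pieces with pairwise-distinct exponential parts proportional to $\zeta^{rj}s^{-(nm+r)}$ for $\zeta = e^{2\pi i/n}$ and $j = 0, \ldots, n-1$. Since $\gcd(r,n)=1$, the deck group $\bZ/n$ of the cover permutes these summands transitively, so the formal $D$-module at $0$ over $\bC(\!(t)\!)$ is irreducible, and hence $V$ is irreducible as an $I$-module.

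Next I would analyze self-duality. Duality of connections negates the exponential parts, so $V \cong V^*$ as $I$-modules if and only if the multiset of leading exponential parts equals its negation, i.e.\ if and only if $-1 \in \mu_n$, i.e.\ if and only if $n$ is even. For $n$ even, I would exhibit the preserved form explicitly by taking $J$ antidiagonal with $J_{i,n+1-i}=(-1)^i$; this $J$ is antisymmetric precisely when $n$ is even. A direct matrix computation, using that $r$ is forced to be odd since $\gcd(r,n)=1$ with $n$ even, gives $JN_r + N_r^T J = 0$ and $JE_r + E_r^T J = 0$, so the connection preserves the symplectic form $J$, forcing $I \subseteq \SP_n$. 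For $n$ odd, the failure of self-duality implies that $I$ preserves no nondegenerate bilinear form.

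For reductivity, I would argue as in the generalization of \cite[Prop.~8]{FG}: since $\fg^{I_0}=0$ by Lemma~\ref{l:localDifferential}, we have $\fg^I = 0$, which forces $I$ to be reductive. To finish, I would show that $I$ equals all of $\SL_n$ (resp.\ $\SP_n$) by invoking the classification of irreducibly-embedded connected reductive subgroups of $\SL_n$ (preserving no bilinear form) resp.\ $\SP_n$, and ruling out proper candidates using the specific local data already contained in $I$: the image of the Coxeter torus $\cC$, a Coxeter element of order $n$ coming from the tame part of $I_0$, and either the unipotent monodromy $\exp(N_r)$ at $\infty$ (when $m=0$) or Stokes generators at $0$ (when $m \ge 1$). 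The hard part will be this last step, since the combinations of elements listed above must be matched against the classification of irreducible reductive subgroups in order to conclude $I^0 = \SL_n$ (resp.\ $\SP_n$); in the special case $r=1$, $m=0$ this reduces to the Frenkel--Gross computation in \cite[\S 13]{FG}, and for general $(r,m)$ the Stokes input is essential when $I_\infty$ is trivial or far from a principal unipotent.
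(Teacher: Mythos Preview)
Your upper-bound arguments are fine: tracelessness gives $I\subseteq\SL_n$, and for even $n$ the antidiagonal $J$ (equivalently, the nontrivial pinned outer automorphism of $\mathfrak{sl}_n$, which is how the paper phrases it) gives $I\subseteq\SP_n$. Irreducibility and reductivity are also correctly argued. The gap is your final step: you propose to rule out every proper irreducibly-embedded reductive subgroup of $\SL_n$ (resp.\ $\SP_n$) by matching local data, but you do not carry this out, and you yourself flag it as ``the hard part''. That classification is genuinely nontrivial---irreducible subgroups include symmetric-power and tensor-product embeddings of smaller classical groups, exceptional groups in low-dimensional representations, and so on---and the local invariants you list (a maximal torus, a Coxeter element, a non-principal unipotent, or unspecified Stokes generators) do not obviously single out the full group without a lengthy case analysis. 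As stated, the proposal is an outline with the decisive step missing.

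The paper bypasses this entirely by invoking Theorem~2.8.1 of \cite{Katzbook} (which rests on Gabber's torus trick): for an irreducible connection on $\bGm$ whose unique slope at the irregular point has exact denominator $n$ (equivalently, the slope $m+r/n$ occurs with multiplicity $n$ in the slope decomposition), the derived identity component $(I^\circ)'$ is automatically one of $\SL_n$, $\SP_n$, or $\SO_n$. Since $\gcd(r,n)=1$, this hypothesis holds. Combined with your upper bounds, the conclusion is immediate: for odd $n$ Katz gives $(I^\circ)'=\SL_n$, hence $\SL_n\subseteq I\subseteq\SL_n$; for even $n$ the containment $I\subseteq\SP_n$ forces $(I^\circ)'=\SP_n$ among the three candidates, hence $I=\SP_n$. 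Thus the missing ingredient is not further local analysis but this single black-box result of Katz.
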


\begin{proof}  
  Let $I$ denote the differential Galois group of
  $\mathrm{std}_{\nabla_{r,m}}$.  Let $I^\circ$ denote the identity
  component of $I$ and ${I^\circ}'$ the derived subgroup of $I^\circ$.
  The connection $\nabla_{r,m}$ is irreducible as its restriction to $0$
  is irreducible. Moreover, as $\gcd(r,n)=1$ and the irregularity is
  an integer, the slope $m+\frac{r}{n}$ in the slope decomposition at
  the irregular point must appear with multiplicity $n$. Therefore, we
  are in a position to apply Theorem 2.8.1 of
  \cite{Katzbook}\footnote{This result depends on Gabber's ``torus
    trick'' \cite[\S 1]{Katzbook}, which appears to be a type $A$
    phenomenon.} to conclude that ${I^\circ}'$ equals $\SL_n$ if $n$
  is odd, and it equals one of $\SL_n$, $\SP_n$ or $\SO_n$ if $n$ is
  even.

Next, observe that the defining matrix $t^{-m}(N_r+t^{-1}E_r)$ is in $\Sl_n(\C[t,t^{-1}])$. Thus,
by \cite[Proposition 1.31]{SV}, we have $I\subseteq \SL_n$. Now for
odd $n$, the previous paragraph implies that $I\supseteq \SL_n$ and so
$I=\SL_n$. For even $n$, the nontrivial pinned
automorphism $\sigma$ of $\mathfrak{sl}_n$ fixes $N_r$ and
$E_r$. Thus, $I$ is a subgroup of $\SL_n^\sigma=\SP_n$. In view of the
previous paragraph, we have $I=\SP_n$.
\end{proof}

\begin{rem} The Coxeter connection $\nabla_{1,1}$ associated to $G=G_2$ appears in  \cite[\S 2.10.6]{Katzbook}.\footnote{We thank the referee for bringing this to our attention.}  Indeed, a computation shows that the realisation of $\nabla_{1,1}$ in the seven-dimensional representation of $G_2$ is given by the differential equation 
\begin{equation}
 \partial^7 - 2t \partial -t,
\end{equation} 
where $\partial=t\frac{d}{dt}$. Moreover, Katz proved that the global differential Galois of the above differential equation is $G_2$. 
\end{rem}

\begin{bibdiv}
\begin{biblist}

\bib{Arinkin}{article} 
{
Author={Arinkin, D.}, 
Title= {Rigid irregular connections on $\bP^1$},
Journal={Compos. Math.}, 
Year={2010},
Volume={146}, 
Pages={1323--1338}
}

 \bib{BV}{article}
  {
    AUTHOR = {Babbitt, B.},
    Author={Varadarajan, P.}, 
    TITLE = {Formal reduction theory of meromorphic differential equations:
              a group theoretic view},
   JOURNAL = {Pacific J. Math.},
    VOLUME = {109},
      YEAR = {1983},
     PAGES = {1--80},
 }

\bib{BE}{article}
  {
    AUTHOR = {Bloch, S.},
    Author={Esnault, H.}, 
    TITLE = {Local {F}ourier transforms and rigidity for {D}-modules}, 
   JOURNAL = {Asian J. Math.},
    VOLUME = {8},
      YEAR = {2004},
     PAGES = {587--606},
 }

\bib{Boalch}{article}
{
    AUTHOR = {Boalch, P.},
     TITLE = {Riemann--Hilbert correspondence for tame parahoric connections},
   Journal = {Transform. Groups}, 
 VOLUME = {16}, 
 Year={2011},
 Pages={27--50}, 
}

\bib{BremerSagemodspace}{article}
  {
    AUTHOR = {Bremer, C.},
    Author={Sage, D.~S.}, 
    TITLE = {Moduli spaces of irregular singular connections},
   JOURNAL = {Int. Math. Res. Not.},
    VOLUME = {2013},
      YEAR = {2013},
     PAGES = {1800--1872}
 }

\bib{BremerSageisomonodromy}{article}
  {
    AUTHOR = {Bremer, C.},
    Author={Sage, D.~S.}, 
    TITLE = {Isomonodromic deformations of connections with
      singularities of parahoric formal type},
   JOURNAL = {Comm. Math. Phys.},
    VOLUME = {313},
      YEAR = {2012},
     PAGES = {175--208},
 }

\bib{BSregular}{article}
  {
    AUTHOR = {Bremer, C.},
    Author={Sage, D.~S.}, 
     Title={ Flat G-bundles and regular strata for reductive groups},
    Journal={arXiv:1309.6060}, 
    Year={2014}

    }

\bib{BSminimal}{article}
  {
    AUTHOR = {Bremer, C.},
    Author={Sage, D.~S.}, 
     Title={A theory of minimal K-types for flat G-bundles},
JOURNAL = {Int. Math. Res. Not.},
    VOLUME = {2018},
    Year={2018},
PAGES={3507--3555}
 }

 \bib{Chen}{article}
{
 Author={Chen, T-H.},
  Title={Vinberg's $\theta$-groups and rigid connection},
 Journal={Int. Math. Res. Not.},
 Year={2017},
 Volume={23},
 Pages={7321--	7343}
}

 \bib{CK}{article}
{
 Author={Chen, T-H.},
 Author={Kamgarpour, M.},
  Title={Preservation of depth in local geometric Langlands correspondence},
 Journal={Trans. Amer. Math. Soc.},
 Year={2017}, 
 Volume={369},
 Pages={1345--1364}
}

 \bib{CM}{book}
{
 Author={Collingwood, D.},
 Author={McGovern, W.},
  Title={Nilpotent orbits in semisimple Lie algebras},
 Publisher={Van Nostrand Reinhold Co., New York},
 Year={1993},
}

\bib{deGraaf}{article}
{
Author={de Graaf, W.}, 
Title={Computing with nilpotent orbits in simple Lie algebras of exceptional type},
Journal = {LMS J. Comput. Math.},
Volume={11}, 
Year={2010}, 
Pages={280--297}
}

\bib{Deligne}{book}
{
    AUTHOR = {Deligne, P.},
     TITLE = {\'{E}quations diff\'erentielles \`a points singuliers
              r\'eguliers},
    SERIES = {Lecture Notes in Mathematics},
VOLUME={163},
 PUBLISHER = {Springer-Verlag},
   ADDRESS = {Berlin},
      YEAR = {1970},
}

\bib{ExponentialSums}{incollection}
{
    AUTHOR = {Deligne, P.},
     TITLE = {Application de la formule des traces aux sommes trigonom\'{e}triques},
    Booktitle = {Cohomologie \'{e}tale},
    Series={Lecture Notes in Math.}
VOLUME={569},
 PUBLISHER = {Springer, Berlin},
      YEAR = {1977},
      Pages={168--232},
}

\bib{FrenkelBook} {book}
{
AUTHOR = {Frenkel, E.},
     TITLE = {Langlands correspondence for the loop group},
  Publisher={Cambridge University Press, Cambridge}, 
  Series={Cambridge Studies in Advanced Mathematics}, 
  Volume={103},
 Year={2007},
 }

\bib{FG} {article}
{
AUTHOR = {Frenkel, E.},
Author={Gross, B.},
     TITLE = {A rigid irregular connection on the projective line},
   JOURNAL = {Ann. of Math. (2)},
  FJOURNAL = {Annals of Mathematics. Second Series},
    VOLUME = {170},
      YEAR = {2009},
     PAGES = {1469--1512},
}

\bib{GKM}{article} 
{
Author={Goresky, M.},
Author={Kottwitz, R.},
Author={MacPherson, R.},
Title={Purity of equivalued affine springer fibres}, 
Year={2006}, 
Journal={Representation Theory},
Volume={10},
Pages={130--146}
}

\bib{HNY}{article} 
{
Author={Heinloth, J.},
Author={Ng\^{o}, B. C.},
Author={Yun, Z.},
Title={Kloosterman sheaves for reductive groups}, 
Year={2013}, 
Journal={Ann. of Math. (2)},
Volume={177},
Pages={241--310},
}

\bib{Kac}{book}{
AUTHOR = {Kac, V. G.},
     TITLE = {Infinite-dimensional {L}ie algebras},
 PUBLISHER = {Cambridge University Press},
 YEAR = {1990},
 }

\bib{KS}{article}
{
Author={Kamgarpour, M.}, 
Author={Sage, D.},
Title={A geometric analogue of a conjecture of Gross and Reeder},
Volume= {141}, 
Journal={Amer. J. Math.},
Year={2019},
Pages={1457--1476},
}

\bib{KatzKloosterman}{book}
{
    AUTHOR = {Katz, N. M.},
     TITLE = {Gauss sums, {K}loosterman connections and monodromy},
  SERIES = {Annals of Mathematics Studies},
VOLUME={116},
 PUBLISHER = {Princeton University Press, Princeton, NJ},
      YEAR = {1988},
   }

\bib{Katzbook}{book}
{
    AUTHOR = {Katz, N. M.},
     TITLE = {Exponential sums and differential equations},
  SERIES = {Annals of Mathematics Studies},
VOLUME={124},
 PUBLISHER = {Princeton University Press, Princeton, NJ},
      YEAR = {1990},
   }
   
\bib{KatzRigid}{book}
{
    AUTHOR = {Katz, N. M.},
     TITLE = {Rigid local systems},
  SERIES = {Annals of Mathematics Studies},
VOLUME={139},
 PUBLISHER = {Princeton University Press, Princeton, NJ},
      YEAR = {1996},
   }

\bib{KatzFinite}{article}
{
    AUTHOR = {Katz, N. M.},
     TITLE = {Rigid local systems on $\bA^1$ with finite monodromy},
     Year={2018}, 
   Journal = {Mathematika}, 
   Volume={64}, 
   Issue={3}, 
   Pages={785--846}, 
   }

   \bib{KL88}{article}
{
 Author={Kazhdan, D.},
 Author={Lusztig, G.},
     TITLE = {Fixed point varieties on affine flag manifolds},
   JOURNAL = {Israel J. Math.},
  FJOURNAL = {Israel Journal of Mathematics},
    VOLUME = {62},
      YEAR = {1988},
     PAGES = {129--168},
}

\bib{Kostant}{article}
  {
   Author = {Kostant, B.},
          TITLE = {The principal three-dimensional subgroup and the Betti numbers
of a complex simple Lie group},
JOURNAL = {Amer. J. Math.},
VOLUME = {81},
      YEAR = {1959},
      Pages ={973--1032}
}

\bib{LT}{article}
{
Author={Lam, T.},
Author={Templier, N.},
Title={Mirror symmetry for minuscule flag varieties},
Journal = {arXiv: 1705.00758},
Year={2017},
}

\bib{MP}{article}
{
Author={Moy, A.},
Author={Prasad, G.},
Title={Unrefined minimal K-types for p-adic groups},
Journal={Invent. Math.},
Year={1994},
Volume={116},
Pages={393--408},
}

\bib{MY}{article}
{
Author={Moreau, A.},
Author={Yu, R. W. T.},
Title={Jet schemes of nilpotent orbit closures},
Journal={Pacific J.  Math.},
Year={2016},
Volume={281},
Pages={137--183},
}

\bib{RY}{article}
{ Author={Reeder, M.},
Author={Yu, J. K.},
Title={Epipelagic representations and invariant theory}, 
Journal={J. Amer. Math. Soc.}, 
Volume={27},
Year={2014},
pages={437--477}
}

\bib{Riemann}{article}
 {
Author={Riemann, B.}, 
Title = {Beitrage zur Theorie der durch die Gau{\ss}sche Reihe $F(\alpha, \beta, \gamma, x)$ darstellbaren Functionen},
Journal={Aus dem siebenten Band der Abhandlungen
der Koniglichen Gesellschaft der
Wissenschaften zu Gottingen}, 
Year={1857},
}

\bib{Sageisaac}{incollection}
  {AUTHOR = {Sage, D. S.},
     TITLE = {Regular strata and moduli spaces of irregular singular
              connections},
 BOOKTITLE = {New trends in analysis and interdisciplinary applications},
    SERIES = {Trends Math. Res. Perspect.},
     PAGES = {69--75},
 PUBLISHER = {Birkh\"{a}user/Springer, Cham},
      YEAR = {2017},
}

\bib{Springer}{article}
  {
   Author = {Springer, T. A.},
          TITLE = {Regular elements of finite reflection groups},
   JOURNAL = {Invent. Math.},
Volume={25},
      YEAR = {1974},
      Pages ={159--198}
}

\bib{SV}{book}
{AUTHOR = {van der Put, M.},
  AUTHOR = {Singer, M. F.},
     TITLE = {Galois theory of linear differential equations},
    SERIES = {Grundlehren der Mathematischen Wissenschaften},
    VOLUME = {328},
 PUBLISHER = {Springer-Verlag, Berlin},
      YEAR = {2003},
}

\bib{Steinberg}{article}
  {
   Author = {Steinberg, R.},
          TITLE = {Regular elements of semi-simple algebraic groups},
   JOURNAL = {Publ. Math. Inst. Hautes \'Etudes Sci.},
Volume={25},
      YEAR = {1965},
      Pages ={159--198}
}

\bib{YunGalois}{article}
  {
    AUTHOR = {Yun, Z.},
     Title={Motives with exceptional Galois groups and the inverse Galois problem},
     Journal={Invent. Math.}
     Year={2014},
     Volume={196},
     Pages={267--337}
 }

\bib{YunCDM}{incollection}
{
  AUTHOR = {Yun, Z.},
     TITLE = {Rigidity in automorphic representations and local systems},
 BOOKTITLE = {Current developments in mathematics 2013},
     PAGES = {73--168},
 PUBLISHER = {Int. Press, Somerville, MA},
      YEAR = {2014},
 }

\bib{Yun}{article}
  {
    AUTHOR = {Yun, Z.},
     Title={Epipelagic representations and rigid local systems},
    Journal={Selecta Math. (N.S.)}, 
    Year={2016},
pages={1195--1243},
Volume={22}, 
 }

\bib{Zhu}{article}
  {
    AUTHOR = {Zhu, X.},
     Title={Frenkel-Gross's irregular connection and
       Heinloth-Ng\^{o}-Yun's are the same},
    Journal={Selecta Math. (N.S.)}, 
    Year={2017},
    Volume={23}, 
pages={245--274}
 }

\end{biblist} 
\end{bibdiv} 
  \end{document}